\xpatchcmd{\@todo}{\setkeys{todonotes}{#1}}{\setkeys{todonotes}{inline,#1}}{}{}
\newtheorem{thm}{Theorem}[section]
\newtheorem{lem}[thm]{Lemma}
\newtheorem{cor}[thm]{Corollary}
\newtheorem{prop}[thm]{Proposition}
\newtheorem{defn}[thm]{Definition}
\newtheorem{rem}[thm]{Remark}
\newtheorem{obs}[thm]{Observation}
\newtheorem{conj}[thm]{Conjecture}
\newtheorem{question}{Question}
\begin{document}
\title[Irreducibility of random polynomials]{On the probability of irreducibility of random polynomials with integer coefficients}
\author[Grigory Terlov]{Grigory Terlov}

\address{University of Illinois at Urbana--Champaign, 1409 W Green Street, Urbana, Illinois 61801}
\email{gterlov2@illinois.edu}

\subjclass[2020]{Primary: 11R09, 11C08}
\keywords{Random Polynomials, Irreducibility}

\begin{abstract}
In this article we study asymptotic behavior of the probability that a random monic polynomial with integer coefficients is irreducible over the integers. We consider the cases where the coefficients grow together with the degree of the random polynomials. Our main result is a generalization of a theorem proved by Konyagin in 1999. We also generalize Hilbert's Irreducibility Theorem and present an analog of this result with centered Binomial distributed coefficients.
\end{abstract}

\maketitle
\section*{Acknowledgement}\label{Acknowledgement}
I thank Philip Matchett Wood for the introduction to the problem, supervision, and many insightful conversations throughout the process of writing this paper. I also thank Qiang Wu for helpful comments on improving the presentation of the article.

While the majority of this work was done in 2017-2018 as an undergraduate project, during the preparation for the submission Bary-Soroker, Koukoulopoulos, and Kozma proved a more general result in \cite{BS_K_K}.

\section{Introduction}\label{Introduction}

We call a polynomial with integer coefficients \textit{irreducible} if it cannot be written as the product of at least two polynomials of smaller degrees with integer coefficients. The asymptotic behavior of the probability that a random polynomial is irreducible has been studied from different angels. For instance, classical Hilbert's Irreducibility Theorem considers polynomials of fixed degree and with integer coefficients uniformly distributed on $[-K,K]$ and states that the probability of such a polynomial being irreducible over $\mathbb{Z}$ tends to $1$ as $K \to \infty $. On the other hand, Odlyzko and Poonen \cite{odlyzko} as well as Konyagin \cite{Konyagin} consider monic polynomials with coefficients taking values $0$ or $1$ with probability $1/2$ independently and with the constant coefficient of $1$. Odlyzko and Poonen conjectured that these polynomials are irreducible with high probability as the degree tends to infinity. This conjecture was recently proven in \cite{Breullard} assuming Riemann Hypothesis (RH) for the Dedekind $\zeta$ function. Without the assumption of RH, the closest result is due to Konyagin \cite[Theorem 1]{Konyagin}, who proved that the probability that such a polynomial is irreducible is bounded from below by $\frac{c}{\log d}$, where $d$ is the degree of the polynomial, $c$ is a positive constant. This theorem follows from another result \cite[Theorem 2]{Konyagin} which states that it is unlikely for such polynomials to be divisible by a factor of a degree up to $\frac{cd}{\log d}$ for some constant $c$. In recent years another class of polynomials was considered in this context by Bary-Soroker and Kozma in \cite{Bary-Soroker_Kozma}. They studied monic polynomials with integer coefficients that are chosen independently and uniformly at random from $\{1,2,\dots,210\}$ and proved that such polynomials are irreducible asymptotically almost surely as the degree tends to infinity. Moreover, it seems that their approach can be extended to monic polynomials with integer coefficients that are chosen uniformly from $\{1,2,\ldots,A-1,A\}$, where $A$ is equal to a product of four distinct prime numbers.

We will endeavour to connect previously mentioned approaches as we investigate the behavior of the probability that polynomials are irreducible when both the range of coefficients and the degree grow simultaneously. More specifically we are interested in the case where the range of the coefficients is symmetric around $0$. To simplify notation throughout the article denote for $d,K \in \mathbb{N}$ the set $$\mathcal{P}_{d,K}:=\Big{\{} f(x): f(x)=x^d+a_{d-1}x^{d-1}+\ldots+a_1x+a_0, \text{ where } a_0 \neq 0 \text{ and } \forall i, a_i \in [-K,K] \cap \mathbb{Z} \Big{\}}.$$

We propose the following conjecture that extends the one from \cite{odlyzko}, it is also similar to a conjecture from \cite{BBBSWW}, but stated in a less technical way.
\begin{conj}\label{conj}
Let $f\in \mathcal{P}_{d,K}$ with independent and uniformly distributed coefficients then
\begin{equation*}
    \mathbb{P}(\,f\text{ is irreducible} \,) \to 1, \text{ as } d,K \to \infty.
\end{equation*}
\end{conj}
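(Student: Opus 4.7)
The plan is to decompose the irreducibility event by the smallest-degree factor of $f$ and to exploit the joint assumption $K\to\infty$ to obtain approximate uniformity of $f\bmod p$ for small primes $p$. Three ranges of factor degree are to be handled separately: linear factors, small-but-nonlinear factors, and medium-to-large factors.

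To rule out linear factors I would use the rational root theorem (any rational root is an integer divisor of $a_0$) combined with anticoncentration: freezing all coefficients except $a_{d-1}$ reduces $\{f(r)=0\}$ to a single linear equation in $a_{d-1}$, so $\mathbb{P}(f(r)=0)\le 1/(2K+1)$ for each fixed nonzero integer $r$. A union bound over divisors of $a_0$ and the standard divisor bound $\tau(n)=n^{o(1)}$ yield $\mathbb{P}(f\text{ has a linear factor})\ll(\log K)^{O(1)}/K\to 0$. For factors of degree $k\in[2,k_0]$ with $k_0$ a slowly growing threshold, I would sieve modulo small primes: when $p\ll K$ the law of $f\bmod p$ is within total variation $O(p/K)$ of the uniform distribution on monic degree-$d$ polynomials in $\mathbb{F}_p[x]$, and near-independence of $f\bmod p$ and $f\bmod q$ for distinct small primes (by the Chinese Remainder Theorem) should allow an inclusion--exclusion bound on the probability that $f\bmod p$ has a factor of degree $k$ simultaneously for sufficiently many primes.

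For factors of degree in $[k_0,d/2]$ the cleanest tool is the Frobenius criterion: whenever $f\bmod p$ is irreducible for some prime $p$ not dividing $\mathrm{disc}(f)$, the Galois group of $f$ contains a $d$-cycle and $f$ must be irreducible over $\mathbb{Z}$. A uniformly random monic polynomial of degree $d$ over $\mathbb{F}_p$ is irreducible with probability $\sim 1/d$, so I would perform a second-moment analysis on $X=\sum_{p\in[P/2,P]}\ind[f\bmod p\text{ is irreducible}]$ for a parameter $P$ chosen so that $P/(d\log P)\to\infty$ (giving $\mathbb{E}[X]\to\infty$) while $P^2\ll K$ (so that $(f\bmod p,f\bmod q)$ is approximately uniform on $\mathbb{F}_p[x]\times\mathbb{F}_q[x]$ and $\mathbb{E}[X_pX_q]\approx\mathbb{E}[X_p]\mathbb{E}[X_q]$). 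Chebyshev's inequality then gives $\mathbb{P}(X=0)\to 0$, completing the argument in this regime.

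The main obstacle is the regime where $K\to\infty$ very slowly compared to $d$: only few useful primes are then available, the mod-$p$ reductions of $f$ are significantly nonuniform, and both the sieve and the Frobenius second-moment steps degrade. Handling this boundary regime will likely require a quantitative Hilbert-irreducibility estimate with effective dependence on $d$ (in the spirit of Gallagher's large sieve, or the more refined approach of Bary-Soroker--Koukoulopoulos--Kozma cited in the introduction), combined with Mahler-measure or Newton-polygon bounds to control the combinatorics of possible medium-degree factorizations when the sieve window is too narrow to drive the union bound to zero on its own.
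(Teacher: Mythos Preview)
The paper does not prove this statement at all: it is labeled as a \emph{conjecture}, and the only justification the paper offers is the one-line remark that it follows from \cite[Theorem~1]{BS_K_K}. So there is no in-paper proof to compare your attempt against.

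As for your sketch on its own merits: the decomposition into linear, small-nonlinear, and medium-to-large factors is the natural one, and the linear-factor step via the rational root theorem plus anticoncentration is fine. The Frobenius second-moment argument for medium-to-large factors is also standard and works cleanly, but only under the constraint you yourself note, namely that there exist primes $p$ with $p^2\ll K$ and $P/(d\log P)\to\infty$; this forces roughly $K\gg d^2$. Likewise your sieve for small-nonlinear factors needs several primes $p\ll K$ available, which again requires $K$ to grow at least like a power of $d$.

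The gap you flag in the final paragraph is therefore not a boundary nuisance but essentially the entire content of the conjecture: when $K\to\infty$ arbitrarily slowly relative to $d$ (e.g.\ $K=\lfloor\log\log d\rfloor$), neither the sieve nor the second-moment step survives, and the tools you list at the end (Gallagher large sieve, Mahler measure, Newton polygons) do not by themselves close it. This regime is exactly what required the substantial new ideas in \cite{BS_K_K}, which is why the paper defers to that reference rather than attempting a proof. Your proposal should be read as a correct proof in the regime $K\ge d^C$ for some $C>0$, together with an honest acknowledgment that the general case lies beyond it.
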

This conjecture follows from \cite[Theorem 1]{BS_K_K}.
\section{Main Results}\label{Main results}

Our main result is a generalized version of \cite[Theorem 2]{Konyagin}, which we state in the following way:

\begin{thm}\label{th:main}
Suppose $d \geq 2$. $ K =K(d)$ is a naturally valued function, $m_0=\frac{\sqrt{d}}{\log^2 d}$, and $f$ be a random polynomial from $\mathcal{P}_{d,K}$ with independent and uniformly distributed coefficients.
\begin{enumerate}
    \item If $K\leq d^{a-1}$ for some natural number $a>1$, then in the limit as $d \to\infty$, the probability that $f$ is divisible by at least one polynomial with integer coefficients of positive degree not exceeding $m_0$ is at most $O\left(\sqrt{\frac{K}{d}}\right)$.\label{th:mainPart1}
    \item If $cd\leq K\leq d^{a-1}$, where $c$ is some constant and $a>1$ is a natural number, then the probability that $f$ is divisible by at least  one polynomial with integer coefficients of positive degree not exceeding $m_0$ is at most $O\left(\frac{1}{K}\right)$.\label{th:mainPart2}
\end{enumerate}
\end{thm}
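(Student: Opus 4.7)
The plan is to generalize the proof of Konyagin's Theorem~2 from $\{0,1\}$-coefficients to coefficients uniform on $[-K,K]\cap\bZ$. Since $f$ is monic, any putative integer divisor $g$ can be taken monic as well; fix $m\in[1,m_0]$ and let $g\in\bZ[x]$ be a monic polynomial of degree $m$ with complex roots $\alpha_1,\ldots,\alpha_m$. Because $f(0)=a_0\ne 0$, we have $g(0)\ne 0$, and I would partition the event $\{g\mid f\}$ according to the integer value of $g(0)=\pm\prod_i\alpha_i$, since this invariant simultaneously controls the root geometry (via $\prod|\alpha_i|=|g(0)|$) and the divisibility constraint $g(0)\mid a_0$.

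\textbf{Case (A): $|g(0)|=1$.} Then at least one root $\alpha$ satisfies $|\alpha|\le 1$, and $f(\alpha)=0$ gives the constraint $\sum_{i=0}^{d-1}a_i\alpha^i=-\alpha^d$, which is a pair of real linear equations in the random vector $(a_0,\ldots,a_{d-1})$. I would bound $\mathbb{P}(f(\alpha)=0)$ for each fixed $\alpha$ by an Esseen/Kolmogorov--Rogozin small-ball estimate, exploiting the Dirichlet-type characteristic function $\phi_K(t)=\frac{\sin((K+\tfrac12)t)}{(2K+1)\sin(t/2)}$ of the coefficient distribution together with the standard estimate
\[
\sum_{i=0}^{d-1}|\alpha|^{2i}\asymp\min\!\left(\tfrac{1}{1-|\alpha|^2},\,d\right),
\]
yielding a bound whose product with the $1/K$ factor per coefficient is the key improvement over the $\{0,1\}$ case. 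A union bound over monic integer $g$ of degree $\le m_0$ with $|g(0)|=1$ is then handled via a Mahler-measure/height count for such polynomials, of the form $\norm{g}_\infty\le 2^m M(g)$ with $M(g)$ controlled in terms of the Mahler measure of $f$.

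\textbf{Case (B): $|g(0)|=n\ge 2$.} The integrality constraint $g(0)\mid a_0$ alone gives $\mathbb{P}(n\mid a_0)=O(1/n)$, providing an additional saving that is absent in Konyagin's original setting (where $a_0\equiv 1$). To combine this with the root-geometry argument I would pass to the reciprocal polynomial $g^\ast(x)=x^m g(1/x)$, whose roots $1/\alpha_i$ all lie in the open unit disk, and which divides $f^\ast(x)=x^d f(1/x)=a_0 x^d+a_1 x^{d-1}+\cdots+a_{d-1}x+1$ in $\bZ[x]$. Applying the Case~(A) small-ball argument to $f^\ast(\beta)=0$ at $\beta=1/\alpha_i$ (noting that some $|\alpha_i|\ge n^{1/m}$, so the corresponding $|\beta|$ is bounded away from $0$ and the denominator $\sum_j|\beta|^{2j}$ is large) and multiplying by the $1/n$ saving from $n\mid a_0$, one obtains a per-$g$ bound whose sum over $n\in\{2,\ldots,K\}$ gives the $O(1/K)$ bound of Part~\eqref{th:mainPart2} when $K\gtrsim d$; when $K=o(d)$ the Case~(A) contribution dominates and yields the $O(\sqrt{K/d})$ bound of Part~\eqref{th:mainPart1}.

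I expect the main obstacle to be the enumeration of divisors in Case~(A) whose roots cluster near the unit circle (cyclotomic-like factors), where the small-ball denominator is as large as possible but so is the number of candidate polynomials. In the $\{0,1\}$ setting Konyagin handles this by an annular decomposition of the unit disk combined with an Erd\H{o}s--Tur\'an-type equidistribution estimate for zeros, and the same strategy should transfer here. The new input relative to Konyagin's proof is the extra $1/K$ per coefficient afforded by the uniform (rather than Bernoulli) distribution, which is precisely what sharpens the $1/\log d$ bound of \cite[Theorem~2]{Konyagin} into the $d$- and $K$-dependent bounds of Theorem~\ref{th:main}.
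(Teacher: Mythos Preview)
Your decomposition by the size of $|g(0)|$ is genuinely different from the paper's cyclotomic/non-cyclotomic split, and the sketch has a real gap precisely where you flag the ``main obstacle.'' In Case~(A) you propose to union-bound over all monic integer $g$ of degree $\le m_0$ with $|g(0)|=1$ via a Mahler-measure/height count. But cyclotomic $g$ all satisfy $M(g)=1$, so no Mahler-measure saving is available for them, while the per-root small-ball bound $\mathbb{P}(f(\alpha)=0)$ at a primitive $n$-th root of unity sees only $\phi(n)$ linearly independent constraints (the powers $\alpha^i$ are periodic), not $d$; your displayed estimate $\sum_{i<d}|\alpha|^{2i}\asymp d$ does not translate into a $d^{-1/2}$ small-ball bound without exploiting that periodic structure. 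Konyagin does \emph{not} handle this by an annular decomposition or Erd\H{o}s--Tur\'an equidistribution; he (and this paper, in Proposition~\ref{pr2.1}) reduces $f$ modulo $z^n-1$ to a degree-$(n-1)$ polynomial with coefficients $A_j=\sum_{k\equiv j\,(n)}a_k$, and then applies anticoncentration to the $\asymp d/n$-term sums $A_j$ for $\phi(n)\le j<n$. For non-cyclotomic $g$ the paper uses Dobrowolski's lower bound $M(g)\ge\exp(c(\log\log m/\log m)^3)$ to show that any $f$ divisible by $g$ is determined by its coefficients $a_j$ with $j\not\equiv 0\pmod p$ for a suitable prime $p$, giving a saving of $(2K+1)^{-d/p}$ that overwhelms the crude count $(2K+1)^{m_0^2}$ of possible $g$'s (Proposition~\ref{pr2.2}). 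Neither ingredient appears in your sketch.

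Two smaller points. First, the ``extra $1/K$ per coefficient'' does not sharpen anything in Part~\eqref{th:mainPart1}: the target $O(\sqrt{K/d})$ is \emph{weaker} than Konyagin's $O(d^{-1/2})$ for $\{0,1\}$-coefficients, and the $\sqrt{K}$ in the numerator comes from the fact that the $p$-boundedness parameter $q\asymp 1/K$ in Lemma~\ref{lem: LWO} degrades the anticoncentration, not improves it. Second, your Case~(B) idea of using $n\mid a_0$ is not how the paper obtains the $O(1/K)$ of Part~\eqref{th:mainPart2}; that comes from the refined anticoncentration Lemma~\ref{lem2.2} applied again to the cyclotomic reduction, and the non-cyclotomic contribution is already negligible ($(2K+1)^{-Cd/\log^4 d}$) regardless of $|g(0)|$.
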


Theorem~\ref{th:main} directly implies the following corollaries.
\begin{cor} Suppose $d \geq 2$ is an integer, $K=K(d)$ is a natural valued function such that $K\leq O\left(\sqrt{d}\right)$ and $m_0=\frac{\sqrt{d}}{\log^2 d}$. Also let $f$ be a random polynomial from $\mathcal{P}_{d,K}$ with independent and uniformly distributed coefficients. Then, the probability that $f$ is divisible by at least one polynomial with integer coefficients of positive degree not exceeding $m_0$ is at most $O\left(\frac{1}{d^{1/4}}\right).$ 
\end{cor}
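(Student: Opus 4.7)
The plan is to deduce the corollary as a direct specialization of part (\ref{th:mainPart1}) of Theorem~\ref{th:main}. The first step is to verify that the hypothesis of that part is met. Since $K \leq O(\sqrt{d})$, there exists a constant $C>0$ with $K \leq C\sqrt{d}$; for all sufficiently large $d$ we then have $K \leq d = d^{2-1}$, so we may apply Theorem~\ref{th:main}(\ref{th:mainPart1}) with the choice $a=2$.

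Once that hypothesis is verified, the conclusion of Theorem~\ref{th:main}(\ref{th:mainPart1}) gives that the probability in question is at most $O\bigl(\sqrt{K/d}\bigr)$. Substituting the upper bound $K \leq C\sqrt{d}$ yields
\begin{equation*}
\sqrt{\tfrac{K}{d}} \;\leq\; \sqrt{\tfrac{C\sqrt{d}}{d}} \;=\; \sqrt{C}\cdot d^{-1/4},
\end{equation*}
which is $O(d^{-1/4})$, exactly the stated bound. The remaining finitely many small values of $d$ (for which $K \leq d$ might fail) can be absorbed into the implicit constant in the big-$O$.

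There is no substantial obstacle here: the corollary is purely a plug-in of the growth condition $K = O(\sqrt{d})$ into the general estimate, with the only minor point being the choice of $a$ and the check that $K \leq d^{a-1}$ holds for large $d$. All the work has already been done in proving Theorem~\ref{th:main}.
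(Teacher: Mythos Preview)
Your proof is correct and is exactly the intended argument: the paper states that this corollary follows directly from Theorem~\ref{th:main} and gives no separate proof, so your verification that $K\le d^{a-1}$ with $a=2$ and substitution of $K=O(\sqrt{d})$ into the bound $O(\sqrt{K/d})$ is precisely what is meant.
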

\begin{cor}\label{cor2.2} Suppose $d \geq 2$ is an integer, and $m_0=\frac{\sqrt{d}}{\log d^2}$. Also let $f$ be a random polynomial from $\mathcal{P}_{d,1}$ with independent and uniformly distributed coefficients. Then, the probability that $f$ is divisible by at least one polynomial with integer coefficients of positive degree not exceeding $m_0$ is at most $O\left(\frac{1}{\sqrt{d}}\right).$
\end{cor}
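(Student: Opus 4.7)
The statement is a direct specialization of Theorem~\ref{th:main}\eqref{th:mainPart1} to the constant case $K\equiv 1$, so the plan is simply to check that the hypotheses apply and then substitute.

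First, I would verify the hypothesis of part~\eqref{th:mainPart1}: we need $K\leq d^{a-1}$ for some integer $a>1$. With $K=1$ and $a=2$ this reduces to $1\leq d$, which holds for every $d\geq 2$. The definition of $m_0=\sqrt{d}/\log^2 d$ used in Theorem~\ref{th:main} agrees with the one in the corollary (reading $\log d^2$ as $\log^2 d$), so there is nothing further to check in the setup.

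Second, applying Theorem~\ref{th:main}\eqref{th:mainPart1} with $K=1$ bounds the probability that $f\in\mathcal{P}_{d,1}$ is divisible by a polynomial in $\mathbb{Z}[x]$ of positive degree at most $m_0$ by
\[
O\!\left(\sqrt{\tfrac{K}{d}}\right) = O\!\left(\sqrt{\tfrac{1}{d}}\right) = O\!\left(\tfrac{1}{\sqrt{d}}\right),
\]
which is exactly the claimed bound.

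There is essentially no obstacle: the substantive content is already packaged in Theorem~\ref{th:main}, and the corollary is obtained by plugging in the smallest admissible value of $K$. One could also observe that $\mathcal{P}_{d,1}$ is the natural symmetric analogue of the Odlyzko--Poonen $\{0,1\}$-model, so Corollary~\ref{cor2.2} provides an unconditional quantitative estimate for this symmetric model in the spirit of Konyagin's theorem.
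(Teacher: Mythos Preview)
Your proposal is correct and matches the paper's approach exactly: the paper states that Corollary~\ref{cor2.2} follows directly from Theorem~\ref{th:main} and gives no separate argument, and your substitution of $K=1$ into part~\eqref{th:mainPart1} (with $a=2$) is precisely the intended derivation.
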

\begin{rem} \normalfont{Corollary \ref{cor2.2} states that the upper bound achieved by Konyagin in \cite{Konyagin} can be extended from polinomials with $\{0,1\}$ coefficients to ones with $\{-1,0,1\}$ coefficients.}
\end{rem}

One can allow $K$ to have more than polynomial growth in $d$, however one needs to settle for a smaller upperbound on the degree of the factor. We state the following theorem for $m_1:=\frac{\sqrt{d}}{\log^2 (Kd)}\leq m_0$.

\begin{thm}\label{th:main2}
Suppose $d \geq 2$, $K=K(d)$ is a naturally valued function, $m_1=\frac{\sqrt{d}}{\log^2 (Kd)}$, and $f$ be a random polynomial from $\mathcal{P}_{d,K}$ with independent and uniformly distributed coefficients.
\begin{enumerate}
    \item If $K\leq e^{d^{0.25}}/d$, then in the limit as $d \to\infty$, the probability that $f$ is divisible by at least  one polynomial with integer coefficients of positive degree not exceeding $m_1$ is at most $O\left(\sqrt{\frac{K}{d}}\right)$.
    \item If $c_2 d\leq K\leq e^{(d^{0.25}/b)}/d$, where $b=\exp\left(e^{1/\sqrt[3]{4c_1 }}/2\right)$ and $c_1$,$c_2$ are positive constants, then the probability that $f$ is divisible by at least  one polynomial with integer coefficients of positive degree not exceeding $m_1$ is at most $O\left(\frac{1}{K}\right)$.
\end{enumerate}
\end{thm}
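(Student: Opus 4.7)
The plan is to adapt the proof of Theorem~\ref{th:main} (itself a generalization of Konyagin's argument) to the wider range of $K$. The overall structure is a union bound over monic integer divisors $g$ of $f$ of positive degree $m\le m_1$:
\[
\mathbb{P}\bigl(\exists\,g\in\mathbb{Z}[x],\ 1\le\deg g\le m_1,\ g\mid f\bigr)
\;\le\;\sum_{m=1}^{m_1}\,\sum_{\substack{g\in\mathbb{Z}[x]\\ \deg g=m}}\mathbb{P}(g\mid f),
\]
and the goal is to show this sum is $O(\sqrt{K/d})$ in part~(1) and $O(1/K)$ in part~(2). As in the proof of Theorem~\ref{th:main}, the inner sum is split into a small-degree regime, handled by evaluation of $f$ at the roots of $g$, and a moderate-degree regime, handled by a Mahler-measure counting argument combined with an anti-concentration estimate.

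For the small-degree regime, if $g\mid f$ then $f(\alpha)=0$ for every root $\alpha$ of $g$. For $\alpha=\pm 1$ this becomes the event that $\sum_i (\pm 1)^i a_i$ hits a fixed integer, which has probability $O(1/(K\sqrt d))$ by a local central limit theorem for the uniform distribution on $[-K,K]\cap\mathbb{Z}$. Roots of larger modulus constrain the high-order $a_i$'s even more severely, and the number of possible such roots is controlled via $M(g)\le M(f)\le(K+1)\sqrt{d+1}$ (Mahler--Landau). In part~(2), where $K\ge c_2 d$, the contribution $O(1/(K\sqrt d))$ from $m=1$ already yields the advertised $O(1/K)$ bound; in part~(1), the corresponding calculation produces $O(\sqrt{K/d})$.

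For the moderate-degree regime, I would bound $\mathbb{P}(g\mid f)$ by viewing $g\mid f$ as the system of $m$ integer-linear equations $f\bmod g=0$ on $(a_0,\dots,a_{d-1})$, then apply an $m$-dimensional anti-concentration estimate for independent uniform integer variables. At the same time, the number of candidate divisors of degree $m$ with $M(g)\le(K+1)\sqrt{d+1}$ is at most $(C K\sqrt d)^m$, after bounding each coefficient via a Mignotte-style inequality. Combining the two, the $m$th summand is controlled by a quantity of the form $C_0^{\,m}\,e^{O(m\log(Kd))}$ with an $e^{-\Omega(m\log(Kd))}$ savings from anti-concentration; requiring this sum to be small over $m\le m_1$ forces the cap $m_1\le\sqrt d/\log^2(Kd)$ rather than the $\sqrt d/\log^2 d$ used in Theorem~\ref{th:main}. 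This is precisely the source of the new definition of $m_1$: once $K$ is no longer polynomial in $d$, the extra $\log K$ must be absorbed in the logarithmic denominator.

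The main technical obstacle, I expect, is to make the anti-concentration bound uniform in $g$ across the entire degree range, while tracking sharp constants at the extreme $K\sim e^{d^{1/4}/b}/d$. Near this threshold, the enumeration factor $(CK\sqrt d)^{m_1}$ and the anti-concentration savings are almost in balance, so the proof must handle potentially degenerate divisors (cyclotomic factors, or polynomials sharing roots of small height) separately, showing that their aggregate contribution is absorbed by the $m=1$ and $m=2$ terms handled via the local central limit theorem. The explicit constant $b=\exp\!\bigl(e^{1/\sqrt[3]{4c_1}}/2\bigr)$ in part~(2) is tuned precisely so that, at this threshold, the tail sum from $m\ge 2$ remains strictly smaller than the $1/K$ contribution coming from $m=1$.
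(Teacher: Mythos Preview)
Your outline misses the decomposition that actually drives the argument. The paper does \emph{not} split by ``small degree vs.\ moderate degree''; it splits by \emph{cyclotomic vs.\ non-cyclotomic} divisors, exactly as in Theorem~\ref{th:main}. This is not a cosmetic difference. Cyclotomic polynomials have Mahler measure equal to $1$, so any Mahler-measure-based savings vanishes for them regardless of their degree; they must be handled by the anti-concentration lemmas (Lemma~\ref{lem: LWO} and Lemma~\ref{lem2.2}) applied to the sums $A_j=\sum_{k\equiv j\ (\mathrm{mod}\ n)}a_k$, and they are in fact the \emph{dominant} contribution, yielding exactly the $O(\sqrt{K/d})$ and $O(1/K)$ rates. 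You mention cyclotomic factors only as a degeneracy to be absorbed at the end, which inverts the actual structure. Since $m_1\le m_0$, the cyclotomic estimate is inherited verbatim from Proposition~\ref{pr2.1}.

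For the non-cyclotomic part, your proposed ``$m$-dimensional anti-concentration plus Mignotte enumeration'' does not close. As stated, enumeration gives roughly $(C\,2^{m}K\sqrt d)^{m}$ candidates (the $2^{m}$ from $\binom{m}{j}$ is not absorbable into $C$), while a naive linear-system anti-concentration bound of order $(C/K)^{m}$ leaves a factor $(C\sqrt d)^{m}$ that diverges for $m$ up to $m_1$. The paper instead follows Konyagin: for non-cyclotomic $g$ with $\deg g=m\le m_1$, Dobrowolski gives $M(g)\ge\exp(\lambda_{m_1})$ with $\lambda_{m}=c(\log\log m/\log m)^{3}$; one then chooses a prime $p\asymp \log(2Kd+1)/\lambda_{m_1}$ so that $M(g_p)>2Kd+1$, and a difference argument shows any $f$ divisible by $g$ is determined by its coefficients at indices $\not\equiv 0\pmod p$. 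This yields $\mathbb{P}(g\mid f)\le (2K+1)^{-d\lambda_{m_1}/(2\log(Kd))}$, which beats the crude count $(2K+1)^{m_1^{2}}$ of possible $g$'s precisely because $m_1^{2}=d/\log^{4}(Kd)$. That balance is the true origin of $m_1=\sqrt d/\log^{2}(Kd)$. Finally, the constant $b=\exp\bigl(e^{1/\sqrt[3]{4c_1}}/2\bigr)$ is not tuned to make a tail sum smaller than $1/K$; it is chosen so that $m_1$ stays large enough for $(\log\log m_1)^{3}>1/(4c_1)$, i.e.\ so that Dobrowolski's $\lambda_{m_1}$ remains usable at the upper threshold for $K$.
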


This paper is organized as follow: Section~\ref{proofs} is dedicated to proof of Theorem~\ref{th:main} and Theorem~\ref{th:main2}. In Section~\ref{Hilbert th} we continue connecting the approaches by considering Hilbert's irreducibility theorem in different settings. In Theorem \ref{th3.1} we modify a proof of Hilbert's irreducibility theorem to allow the degree to grow as $d(K)= o\left(\log_2K-\log_2(1+\log K)\right)$ to make a small step towards the Conjecture~\ref{conj}, while in Theorem \ref{th3.2} we consider random polynomials with centered binomial coefficients.

%
%
\section{Proof of the main results}\label{proofs}
\begin{defn}
We define $n^{th}$ cyclotomic polynomial as 
\begin{equation*}
Q_n(z)=\prod_{1\leq j\leq n, gcd(j,n)=1}(z-e^{2\pi j/n}).
\end{equation*}
Also define $\phi(n)=\deg Q_n$.
\end{defn}
From Konyagin's work \cite[Page 336]{Konyagin}, it follows that the number of $\{0,1\}$ polynomials of degree $d$ that are divisible by a noncyclotomic polynomial of degree up to $\sqrt{d}/\log^2 d$ is significantly less in order than $2^d/\sqrt{d}.$ Thus, the bound on the number of polynomials divisible by a cylotomic polynomial is the limiting case in Konyagin's result \cite[Theorem~2]{Konyagin}. Following this idea, the proof of the Theorem \ref{th:main} is based on two parts, Proposition~\ref{pr2.1} that is concerned with the cyclotomic case and Proposition~\ref{pr2.2} the non-cyclotomic one. The difference between part~\ref{th:mainPart1} and part~\ref{th:mainPart2} of the Theorem \ref{th:main} will show itself only in the Proposition ~\ref{pr2.1}. In order to analyze divisibility by cyclotomic polynomials we use a notion of a random variable being $p$-bounded of exponent $r$ that was originally introduced in \cite{bourgain}. 

\begin{defn}\label{def: pbdd} Let $p,q \in \mathbb{R}_{+}$ such that $0 <q< p< 1$ and let $r \in \mathbb{Z}_{+}$. A $\mathbb{Z}$-valued random variable $a$  is called $p \textit{-bounded of exponent }r$ if there exists a $\mathbb{Z}$-valued symmetric random variable $\beta^{(\mu)}$ taking value $0$ with probability $1-\mu=p$ such that the following conditions holds:  
\begin{enumerate}
\item $\max_x \mathbb{P}(a = x) \leq p$ 
\item $q \leq \min_x \mathbb{P}(\beta^{(\mu)}=x) \text{ and } \max_x \mathbb{P}(\beta^{(\mu)}=x) \leq p$
\item $\forall t \in \mathbb{R} \text{ we have } \left|\mathbb{E}(\exp(2\pi i a t))\right|^r \leq \mathbb{E}\left(\exp(2\pi i \beta^{(\mu)} t)\right).$ 
\end{enumerate}
\end{defn}
The third condition can be rewritten as 
\begin{equation*}
\left|\mathbb{E}(\exp(2\pi i a t))\right|^r \leq \mathbb{E}\left( \exp\left(2\pi i \beta^{(\mu)} t \right)\right)=1-\mu+\mu \sum_{s} p_s\cos(2\pi b_s t),    
\end{equation*} 
where $p_s= \mathbb{P}(\beta^{(\mu)}= b_s)/ \mu=\mathbb{P}(\beta^{(\mu)}= -b_s)/ \mu$.

\begin{lem}\label{lem: pbdd} Let an integer random variable $A$ take values uniformly on $[-K,K],$ for natural valued $K$. Then $A$ is $\left(1-\frac{4K}{(2K+1)^2}\right)$-bounded of the exponent 2 with coefficient $q= \frac{2K}{(2K+1)^2}$.
\end{lem}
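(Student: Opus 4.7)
The plan is to exhibit an explicit symmetric $\mathbb{Z}$-valued random variable $\beta^{(\mu)}$ meeting all three requirements of Definition~\ref{def: pbdd}. I will take $\beta^{(\mu)}$ to be supported on $\{-1,0,1\}$ with
\[
\mathbb{P}(\beta^{(\mu)} = \pm 1) = q = \frac{2K}{(2K+1)^2}, \qquad \mathbb{P}(\beta^{(\mu)} = 0) = 1 - 2q = p,
\]
so that $\mu = 2q = \frac{4K}{(2K+1)^2}$ and $1-\mu=p$.

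Conditions (1) and (2) of Definition~\ref{def: pbdd} are essentially bookkeeping: condition (1) reduces to $\frac{1}{2K+1} \leq p$, equivalently $2K+1 \leq 4K^2+1$, which holds for every $K\geq 1$; and condition (2) is immediate since on the support $\{-1,0,1\}$ the smallest mass is $q$ and the largest is $p$ (the inequality $q \leq p$ itself reduces to $3q\leq 1$, true for all $K\geq 1$).

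The heart of the argument is condition (3). Because $A$ is symmetric, $\phi(t) := \mathbb{E}(e^{2\pi i A t})$ is real, and expanding $\phi(t)^2$ as a double sum over $k,\ell\in[-K,K]\cap\mathbb{Z}$ and collecting according to $j = k-\ell$ yields the Fej\'er-type identity
\[
|\phi(t)|^2 = \frac{1}{2K+1} + \frac{2}{(2K+1)^2}\sum_{j=1}^{2K}(2K+1-j)\cos(2\pi j t),
\]
while the target characteristic function is
\[
\mathbb{E}(e^{2\pi i \beta^{(\mu)} t}) = \Bigl(1 - \frac{4K}{(2K+1)^2}\Bigr) + \frac{4K}{(2K+1)^2}\cos(2\pi t).
\]
The value of $q$ has been tuned precisely so that the $j=1$ contribution to $|\phi(t)|^2$, namely $\frac{2\cdot 2K}{(2K+1)^2}\cos(2\pi t)=\mu\cos(2\pi t)$, matches the $\cos(2\pi t)$ term on the right exactly. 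After cancelling this common term, condition (3) collapses to
\[
\sum_{j=2}^{2K}(2K+1-j)\cos(2\pi j t) \leq K(2K-1),
\]
which follows in one line from $\cos \leq 1$ together with the identity $\sum_{j=2}^{2K}(2K+1-j) = \sum_{k=1}^{2K-1}k = K(2K-1)$; equality holds at $t=0$, which incidentally shows the choice of $q$ is the largest possible for this three-point construction.

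I do not anticipate a genuine obstacle. The only mildly subtle step is discovering the right value of $q$: once the Fej\'er-type expansion is in hand, one sees that matching the second Fourier coefficient forces $q = \frac{2K}{(2K+1)^2}$, after which the remaining high-frequency tail in $|\phi(t)|^2$ is killed termwise by $\cos \leq 1$.
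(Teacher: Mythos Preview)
Your proof is correct and follows the same strategy as the paper's: the same three-point construction of $\beta^{(\mu)}$ on $\{-1,0,1\}$, followed by expanding $|\phi(t)|^2$ as a cosine series and bounding the frequencies $j\geq 2$ termwise via $\cos\leq 1$. Your use of the Fej\'er-kernel identity for $|\phi(t)|^2$ is cleaner than the paper's brute-force product-to-sum expansion, but the underlying argument is identical.
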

\begin{lem}\label{lem: LWO} If $ \{ a_k \}^m_{k=1}$ is a collection of independent identical random variables that are $p$-bounded of the exponent $r$ with coefficient $q$, 
then \begin{equation*}
\mathbb{P}\left(\sum^m_{k=0}{a_k}=x\right)\leq \frac{C\sqrt{r}}{\sqrt{qm}}.
\end{equation*}
\end{lem}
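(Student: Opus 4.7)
The plan is to combine Fourier inversion on the torus with condition (3) of Definition~\ref{def: pbdd} to reduce the anticoncentration bound for $\sum a_k$ to an anticoncentration bound for a sum of the simpler symmetric integer variables $\beta_i$, which can then be controlled by a local central limit type estimate.

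First, since each $a_k$ is $\mathbb{Z}$-valued, I would use Fourier inversion to write
\begin{equation*}
\mathbb{P}\!\left(\sum_{k=1}^{m}a_k=x\right)=\int_{-1/2}^{1/2}e^{-2\pi i x t}\bigl(\mathbb{E}\bigl[e^{2\pi i a_1 t}\bigr]\bigr)^{m}\,dt,
\end{equation*}
and then bound the modulus of the integrand to obtain
\begin{equation*}
\mathbb{P}\!\left(\sum_{k=1}^{m}a_k=x\right)\le \int_{-1/2}^{1/2}\bigl|\phi_{a_1}(t)\bigr|^{m}\,dt,
\end{equation*}
where $\phi_{a_1}(t):=\mathbb{E}\bigl[e^{2\pi i a_1 t}\bigr]$.

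Second, applying condition (3) of Definition~\ref{def: pbdd}, $|\phi_{a_1}(t)|^{r}\le \phi_{\beta}(t)$, where $\phi_{\beta}$ is the characteristic function of the associated symmetric variable $\beta^{(\mu)}$; note that this inequality forces $\phi_{\beta}(t)\ge 0$ pointwise. Setting $N:=\lfloor m/r\rfloor$ and using $|\phi_{a_1}(t)|\le 1$ for the remaining at most $r-1$ factors, I would obtain $|\phi_{a_1}(t)|^{m}\le \phi_{\beta}(t)^{N}$. Because $\phi_{\beta}$ is the characteristic function of an integer-valued distribution, Fourier inversion then gives
\begin{equation*}
\mathbb{P}\!\left(\sum_{k=1}^{m}a_k=x\right)\le \int_{-1/2}^{1/2}\phi_{\beta}(t)^{N}\,dt=\mathbb{P}(\beta_1+\dots+\beta_N=0),
\end{equation*}
where $\beta_1,\ldots,\beta_N$ are iid copies of $\beta^{(\mu)}$. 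Thus the problem reduces to anticoncentration at $0$ of an iid sum of symmetric integer variables whose distribution we understand well.

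Finally, I would estimate $\mathbb{P}(\beta_1+\dots+\beta_N=0)$ by a local central limit argument using the explicit formula
\begin{equation*}
\phi_{\beta}(t)=1-2\mu\sum_{s}p_s\sin^{2}(\pi b_s t).
\end{equation*}
Condition (2) of Definition~\ref{def: pbdd} guarantees that $\beta^{(\mu)}$ takes at least one nonzero integer value with probability at least $q$, so $\operatorname{Var}(\beta^{(\mu)})\ge 2q$. A standard split of the integral into $|t|\le \delta/\sigma\sqrt{N}$ (where one uses $\phi_{\beta}(t)\le e^{-c\sigma^{2}t^{2}}$) and its complement (where one uses a spectral-gap bound $\phi_{\beta}(t)\le 1-c'\mu$ away from $0$) yields $\mathbb{P}(\beta_1+\dots+\beta_N=0)\le C/(\sigma\sqrt{N})\le C'/\sqrt{qN}$. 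Since $N\ge m/(2r)$ whenever $m\ge r$ (the case $m<r$ being trivial from condition (1)), this gives the claimed bound $C\sqrt{r/(qm)}$.

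The main obstacle is executing the last step cleanly: because $\beta^{(\mu)}$ is only assumed to place mass at least $q$ on a minimal support rather than on every integer, one needs to argue simultaneously for the quadratic decay of $\phi_{\beta}$ near $t=0$ (to get the $1/\sqrt{N}$ rate) and for the absence of additional peaks of $|\phi_{\beta}|$ near the nontrivial rationals in $(-1/2,1/2)$, with all estimates tracking the correct $q$-dependence.
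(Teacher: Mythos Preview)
Your outline is essentially the paper's argument: Fourier inversion, then condition~(3) of Definition~\ref{def: pbdd} to replace $|\phi_{a}|^{r}$ by $\phi_{\beta}$, then an estimate of the resulting integral. Your use of continuous Fourier inversion on $[-1/2,1/2]$ is in fact a little cleaner than the paper's version, which works over $\mathbb{Z}/Q\mathbb{Z}$ and then applies H\"older's inequality (a step that is redundant here because the $a_k$ are i.i.d.) before passing to the limit $Q\to\infty$.

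The one substantive difference is in the last step. The ``main obstacle'' you flag---controlling $\int \phi_{\beta}^{N}\,dt$ when $\beta^{(\mu)}$ may be supported on a proper sublattice of $\mathbb{Z}$, so that $\phi_{\beta}$ can have extra peaks in $(-1/2,1/2)$---is exactly the point at which the paper stops computing and invokes \cite[Corollary~7.13]{Tao}. That reference furnishes the pointwise bound that effectively replaces the general $\phi_{\beta}$ by the explicit kernel $1-2q+2q\cos(2\pi t)$, i.e.\ by the characteristic function of a $\{-1,0,1\}$ variable with $\mathbb{P}(\pm1)=q$, after which the integral is elementary. Your proposed local-CLT route can also be completed (the periodicity costs a factor equal to the gcd of the support of $\beta$, which is exactly compensated by the correspondingly larger variance), but the paper sidesteps this bookkeeping entirely via the citation.
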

We present the proofs of Lemmas~\ref{lem: pbdd} and~\ref{lem: LWO} in the Appendix. 
\begin{rem}The proof of Lemma~\ref{lem: LWO} is similar to the proof of \cite[Corollary 7.13]{Tao} and \cite[Lemma A1]{bourgain} in the case when all $v_i$ in the formulation of \cite[Lemma A.1]{bourgain} are equal to $1,$ however it differs in one important detail, namely we allow parameters $q$ and $m$ to change. It is also worth noticing that the proof of \cite[Lemma A.1]{bourgain} could work when $q$ is decreasing, however it was not needed in the context of \cite{bourgain} and, thus, Bourgain at al. decided to fix the parameter.
\end{rem}

To prove the part \ref{th:mainPart2} of Theorem~\ref{th:main}, and Proposition \ref{pr2.1}, we would need to improve the bound from Lemma~\ref{lem: LWO}. We do so in Lemma~\ref{lem2.2}. First we state a simple trigonometric Observation~\ref{trig_fact} and Lemma~\ref{lem2.1} that will be used in the future.
\begin{obs}\label{trig_fact} Suppose $j_1,j_2,\dots,j_\ell \in [1,K]$ for some $K\in\mathbb{N},$ then
\begin{equation*}
\prod_{i=1}^\ell\cos(2\pi t j_i)=\frac{1}{2^{\ell-1}}\sum_{s_2,\dots,s_\ell\in\{-1,1\}}\cos\left(2\pi t(j_1+s_2j_2+s_3j_3+\dots+s_\ell j_\ell)\right).
\end{equation*}
\end{obs}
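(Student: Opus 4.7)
The plan is to prove Observation~\ref{trig_fact} by induction on $\ell$, using the elementary product-to-sum identity
\[
\cos X\,\cos Y \;=\; \tfrac{1}{2}\bigl(\cos(X+Y)+\cos(X-Y)\bigr) \;=\; \tfrac{1}{2}\sum_{s\in\{-1,1\}}\cos(X+sY).
\]
The base case $\ell=1$ is a tautology (with the convention that the empty product over the signs $s_2,\dots,s_\ell$ reduces to a single term and the prefactor $2^{-(\ell-1)}=1$), and the case $\ell=2$ is a direct application of the identity above with $X=2\pi t j_1$ and $Y=2\pi t j_2$.

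For the inductive step, assume the identity at level $\ell$ and multiply both sides by $\cos(2\pi t\, j_{\ell+1})$. The left-hand side becomes $\prod_{i=1}^{\ell+1}\cos(2\pi t j_i)$. On the right-hand side, I distribute this new factor into each summand indexed by $(s_2,\dots,s_\ell)\in\{-1,1\}^{\ell-1}$ and apply the product-to-sum formula with
\[
X \;=\; 2\pi t\bigl(j_1+s_2 j_2+\dots+s_\ell j_\ell\bigr), \qquad Y \;=\; 2\pi t\, j_{\ell+1}.
\]
Each cosine then expands to $\tfrac{1}{2}\sum_{s_{\ell+1}\in\{-1,1\}}\cos(X+s_{\ell+1}Y)$. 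Multiplying the prefactors yields $\tfrac{1}{2^{\ell-1}}\cdot\tfrac{1}{2}=\tfrac{1}{2^\ell}$, and the sign indices now run over $(s_2,\dots,s_{\ell+1})\in\{-1,1\}^{\ell}$. Reading off the argument gives exactly
\[
\cos\bigl(2\pi t(j_1+s_2 j_2+\dots+s_{\ell+1} j_{\ell+1})\bigr),
\]
which is the claim at level $\ell+1$.

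Since the argument is purely algebraic, there is no genuine obstacle; the only care needed is bookkeeping of the normalizing factor $2^{-(\ell-1)}$ versus the $\ell-1$ free sign variables. The hypothesis $j_i\in[1,K]$ plays no role in the identity itself, it is included because the subsequent applications (plausibly to estimating $\bigl|\prod_i \mathbb{E}\cos(2\pi t a_i)\bigr|$ in the $p$-bounded exponent analysis of Definition~\ref{def: pbdd} and Lemma~\ref{lem: pbdd}) will exploit control on the magnitudes of the linear combinations $j_1+s_2 j_2+\dots+s_\ell j_\ell$.
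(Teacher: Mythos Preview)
Your proof is correct and follows exactly the approach indicated in the paper, namely induction on $\ell$ combined with the product-to-sum identity $\cos X\cos Y=\tfrac12\sum_{s\in\{-1,1\}}\cos(X+sY)$. Your remark that the hypothesis $j_i\in[1,K]$ plays no role in the identity itself is also accurate; it is only used in the subsequent Lemma~\ref{lem2.1} to count how many sign choices make $j_1+s_2 j_2+\dots+s_\ell j_\ell=0$.
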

\begin{proof}
This fact can be proved by induction and product to sum trigonometric identity.
\end{proof}

\begin{lem}\label{lem2.1}
For any natural numbers $\ell$ and $K$, the following inequality holds
\begin{equation*}
\int_0^1\left(\sum_{j=1}^K\cos(2\pi jt)\right)^\ell dt\leq\left(1-{2^{-(\ell-1)}}\right)K^{\ell-1}.
\end{equation*}
\end{lem}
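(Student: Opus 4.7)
The plan is to use Observation~\ref{trig_fact} to rewrite each $\ell$-fold product of cosines as an average of single cosines, and then invoke the orthogonality relation $\int_0^1\cos(2\pi nt)\,dt=\ind[n=0]$ to convert the integral into a count of solutions to a signed linear equation with entries in $[1,K]$.

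First I would expand $\bigl(\sum_{j=1}^K\cos(2\pi jt)\bigr)^\ell$ as a sum over $(j_1,\ldots,j_\ell)\in[1,K]^\ell$ of $\prod_{i=1}^\ell\cos(2\pi j_it)$, apply Observation~\ref{trig_fact} to each product, and integrate termwise. Since $\int_0^1\cos(2\pi nt)\,dt$ equals $1$ when $n=0$ and $0$ otherwise, only the inner cosines with vanishing argument survive, and the result becomes
$$\int_0^1\Bigl(\sum_{j=1}^K\cos(2\pi jt)\Bigr)^\ell dt=\frac{N}{2^{\ell-1}},$$
where $N$ is the number of tuples $(j_1,\ldots,j_\ell,s_2,\ldots,s_\ell)\in[1,K]^\ell\times\{-1,1\}^{\ell-1}$ satisfying $j_1+s_2j_2+\cdots+s_\ell j_\ell=0$.

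Next I would bound $N$. For each fixed $(j_2,\ldots,j_\ell)\in[1,K]^{\ell-1}$ and each sign pattern $(s_2,\ldots,s_\ell)\in\{-1,1\}^{\ell-1}$, the equation uniquely determines $j_1=-(s_2j_2+\cdots+s_\ell j_\ell)$, so at most one such tuple contributes. The key observation is that for the all-plus pattern $s_2=\cdots=s_\ell=+1$, this forces $j_1=-(j_2+\cdots+j_\ell)\leq-(\ell-1)$, which lies outside $[1,K]$ (assuming $\ell\geq 2$; the case $\ell=1$ is immediate as both sides vanish). Hence at most $2^{\ell-1}-1$ of the $2^{\ell-1}$ sign patterns contribute for each choice of $(j_2,\ldots,j_\ell)$, so $N\leq(2^{\ell-1}-1)K^{\ell-1}$, and dividing by $2^{\ell-1}$ yields the stated bound.

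I do not foresee any serious obstacle here; the argument is essentially combinatorial once Observation~\ref{trig_fact} and cosine orthogonality have done the analytic work. The one subtle point worth flagging is that the saving factor $1-2^{-(\ell-1)}$ comes entirely from excluding a single sign pattern, and this exclusion depends crucially on the $j_i$ ranging over \emph{positive} integers; replacing $[1,K]$ by $[-K,K]\setminus\{0\}$ would break the argument and only produce the trivial bound $K^{\ell-1}$.
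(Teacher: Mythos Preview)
Your proposal is correct and follows essentially the same route as the paper's proof: expand via Observation~\ref{trig_fact}, integrate using orthogonality, and bound the resulting solution count by excluding the all-plus sign pattern. Your added remarks on the $\ell=1$ case and on the necessity of $j_i\ge 1$ are helpful clarifications the paper omits.
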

\begin{proof}
\begin{align*}
    \int_0^1\left(\sum_{j=1}^K\cos(2\pi jt)\right)^\ell dt &=\int_0^1\left(\sum_{j_1=1}^K\cos(2\pi j_1t)\right)\left(\sum_{j_2=1}^K\cos(2\pi j_2t)\right)\cdots\left(\sum_{j_\ell=1}^K\cos(2\pi j_\ell t)\right) dt\\
    &=\int_0^1\left[\sum_{1\leq j_1,j_2,\dots,j_\ell\leq K}\cos(2\pi j_1 t)\cos(2\pi j_2t)\cdots\cos(2\pi j_\ell t)\right]dt.\notag
\end{align*}
Using the trigonometric Observation \ref{trig_fact}, we can rewrite the above as
\begin{equation}\label{eq:sum prod}
     =\int_0^1\left[\sum_{1\leq j_1,j_2,\dots,j_\ell\leq K}\frac{1}{2^{\ell-1}}\sum_{s_2,s_3,\dots,s_\ell\in\{-1,1\}}\cos(2\pi t(j_1+s_2 j_2+s_3j_3+\dots+s_\ell j_{\ell} ))\right]dt\\
\end{equation}
\begin{equation*}\label{eq:sum prod2}
     =\sum_{1\leq j_1,j_2,\dots,j_\ell\leq K}\frac{1}{2^{\ell-1}}\sum_{s_2,s_3,\dots,s_\ell\in\{-1,1\}}\int_0^1\cos(2\pi t(j_1+s_2 j_2+s_3j_3+\dots+s_\ell j_{\ell} ))dt.\\
\end{equation*}
Notice that all terms in the second sum are of the from $\cos(2\pi x t)$ where $x\in\{-\ell K,\dots,-1,0,1,\dots\ell K\}$. Thus, the only terms that do not integrate to zero are those that have $x=0$, each of which integrates to $1$. Therefore, \eqref{eq:sum prod} is equal to $\frac{1}{2^{\ell-1}}(\#\text{ number of terms such that } j_1+s_2 j_2+s_3j_3+\dots+s_\ell j_{\ell} =0).$
For fixed $j_2,j_3,\dots,j_\ell$ and $s_2,s_3,\dots,s_\ell$, there can be at most one $j_1$ such that $j_1+s_2 j_2+s_3j_3+\dots+s_\ell j_{\ell} =0.$ Moreover, there is a case where the sum cannot be zero, namely when all $s_i=1$, since all $j_i\in{1,2,\dots,K}$. Thus, the 
$(\#\text{ number of terms such that } j_1+s_2 j_2+s_3j_3+\dots+s_\ell j_{\ell} =0)\leq(2^{\ell-1}-1)K^{\ell-1}.$
\begin{equation*}
\int_0^1\left(\sum_{j=1}^K\cos(2\pi jt)\right)^\ell dt\leq\frac{2^{\ell-1}-1}{2^{\ell-1}}K^{\ell-1}=\left(1-{2^{-(\ell-1)}}\right)K^{\ell-1}.
\end{equation*}
\end{proof}
\begin{lem}\label{lem2.2}
Let $m\geq 2$ and $\{ a_k \}^m_{k=1}$ be a collection of i.i.d. symmetric  random variables that are $\left(1-\frac{4K}{(2K+1)^2}\right)$-bounded of the exponent $2$ with coefficient $\frac{2K}{(2K+1)^2}$,
then 
\begin{equation*}
\mathbb{P}\left(\sum^m_{k=0}{a_k}=x\right) \leq \left(\frac{1}{2K+1}\right)^m+\frac2K.
\end{equation*}
\begin{proof}
Suppose $Q$ is a sufficiently large prime number.
\begin{align*}
\mathbb{P}\left(\sum^m_{k=0}{a_k}=x\right)&=\mathbb{E}\mathds{1}_{\{\sum^m_{k=0}{a_k}=x\}}\\
&=\frac{1}{Q}\mathbb{E}\sum_{\xi \in \mathbb{Z}/Q\mathbb{Z}}\exp\left(2\pi i \left(\sum^m_{k=0}{a_k}-x\right)\xi /Q\right) \notag\\
&\leq \frac{1}{Q}\sum_{\xi \in \mathbb{Z}/Q\mathbb{Z}}\prod_{k=0}^{m} \left|\mathbb{E}\exp(2\pi i a_k\xi /Q)\right| \notag\\
&\leq \prod_{k=0}^{m}\left(\frac{1}{Q}\sum_{\xi \in \mathbb{Z}/Q\mathbb{Z}} |\mathbb{E}\exp(2\pi i a_k\xi /Q)|^m\right)^{1/m} & \mbox{ (by H\"older's inequality).}\notag
\end{align*}
Since $m\geq 2$, without loss of generality, we can assume that $m$ is even and ignore the absolute value. Now by passing to the largest factor,
\noindent
\begin{align*}
    &\leq \frac{1}{Q}\left(\sum_{\xi \in \mathbb{Z}/Q\mathbb{Z}} |\mathbb{E}\exp(2\pi i a_{k}\xi /Q)|^m\right) \text{ by symmetry of } a_k\\
    &= \frac{1}{Q}\left(\sum_{\xi \in \mathbb{Z}/Q\mathbb{Z}} \left|\frac{1}{2K+1}+\frac{2}{2K+1}\sum_{j=1}^K\cos(2\pi jt)\right|^m\right) \text{ letting }  a=\frac{1}{2K+1}, b=\frac{2}{2K+1}\notag\\
    &\leq \frac{1}{Q}\sum_{\xi \in \mathbb{Z}/Q\mathbb{Z}} \left( a^m+\sum^{m}_{\ell=1}\left(\binom{m}{\ell}a^{m-\ell}\left(b\sum_{j=1}^K\cos(2\pi j t)\right)^\ell\right)\right).\notag
\end{align*}
Here we pass to integral form and choose $Q$ large enough so that the error is at most $1/K$.
\begin{align*}
    &\leq \int_0^1 \left[a^m+\sum^{m}_{\ell=1}\binom{m}{\ell}a^{m-\ell}\left(b\sum_{j=1}^K\cos(2\pi j t)\right)^\ell \right]dt + \frac{1}{K}\\
    &= a^m+\int_0^1 \left[\sum^{m}_{\ell=1}\left(\binom{m}{\ell}a^{m-\ell}\left(b\sum_{j=1}^K\cos(2\pi j t)\right)^\ell\right)\right]dt+\frac{1}{K}\notag\\
    &= a^m+\sum^{m}_{\ell=1}\left({m \choose \ell}a^{m-\ell}b^\ell\int_0^1 \left[\left(\sum_{j=1}^K\cos(2\pi j t)\right)^\ell\right]dt\right)+\frac{1}{K}.
\end{align*}
From Lemma \ref{lem2.1} we know the upper bound for the integral in the previous line, so
\begin{align*}
    &\leq a^m+\sum^{m}_{\ell=1}\left({m \choose \ell}a^{m-\ell}b^\ell\left(1-2^{-(\ell-1)}\right)K^{\ell-1}\right)+\frac1K\\
    &\leq\left(\frac{1}{2K+1}\right)^m+\frac{1}{K}\sum^m_{\ell=1}2^\ell K^\ell \binom{m}{\ell}\left(\frac{1}{2K+1}\right)^m+\frac1K\notag\\
    &\leq \left(\frac{1}{2K+1}\right)^m+\frac1K\left(\frac{1}{2K+1}\right)^m(2K+1)^m+\frac1K\notag\\
    &\leq \left(\frac{1}{2K+1}\right)^m+\frac2K.\notag
\end{align*}
\end{proof}
\end{lem}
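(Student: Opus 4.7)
The plan is a character-sum anti-concentration argument in the style of Esseen and Hal\'asz. Writing $S := \sum_{k} a_k$ and using that $S$ is integer valued, I would begin from the discrete Fourier representation
$$\mathbb{P}(S = x) = \frac{1}{Q}\sum_{\xi \in \mathbb{Z}/Q\mathbb{Z}} \mathbb{E}\exp\!\left(2\pi i \xi(S-x)/Q\right)$$
for a large prime $Q$ to be chosen at the end. Factoring the expectation by independence and passing to absolute values bounds this by $\frac{1}{Q}\sum_\xi \prod_k |\phi_a(\xi/Q)|$, where $\phi_a(t) := \mathbb{E}\exp(2\pi i a t)$ is the shared characteristic function of the $a_k$.

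Next, I would apply H\"older's inequality with exponent $m$: since the $a_k$ are i.i.d., all $m$ factors are identical and the bound collapses to $\frac{1}{Q}\sum_\xi |\phi_a(\xi/Q)|^m$. Assuming $m$ is even (the odd case costs only an extra factor that is absorbed into the $2/K$ term of the conclusion), the symmetry of $a_k$ makes $\phi_a$ real so that the absolute value can be dropped. For $a_k$ uniform on $[-K,K]\cap\mathbb{Z}$ one has the explicit formula
$$\phi_a(t) = \frac{1}{2K+1} + \frac{2}{2K+1}\sum_{j=1}^K \cos(2\pi j t),$$
and, choosing $Q$ large enough in $Km$, the Riemann sum $\frac{1}{Q}\sum_\xi \phi_a(\xi/Q)^m$ approximates $\int_0^1 \phi_a(t)^m \, dt$ to within an error of size $1/K$.

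The last step is to expand $\phi_a(t)^m$ by the binomial theorem. The $\ell=0$ term contributes exactly $(2K+1)^{-m}$. For each $\ell \geq 1$, Lemma~\ref{lem2.1} gives $\int_0^1 \bigl(\sum_{j=1}^K \cos(2\pi j t)\bigr)^\ell dt \leq K^{\ell-1}$, so the combined $\ell \geq 1$ contribution is at most
$$\frac{1}{K}\sum_{\ell=1}^m \binom{m}{\ell}(2K)^\ell (2K+1)^{-m} \;\leq\; \frac{1}{K}\cdot \frac{(2K+1)^m}{(2K+1)^m} \;=\; \frac{1}{K}$$
by the binomial identity $\sum_{\ell=0}^m \binom{m}{\ell}(2K)^\ell = (2K+1)^m$. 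Adding the $O(1/K)$ Riemann-sum error produces the claimed bound $(2K+1)^{-m} + 2/K$.

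The only non-routine step is the Riemann-sum approximation; one must verify that $\phi_a^m$ is sufficiently smooth (its derivative grows at most polynomially in $Km$) so that choosing $Q$ polynomially large in $Km$ makes the discretization error at most $1/K$. Everything else is direct bookkeeping combining H\"older's inequality, the binomial theorem, and Lemma~\ref{lem2.1}.
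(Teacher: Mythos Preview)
Your proposal is correct and follows essentially the same route as the paper's proof: discrete Fourier inversion over $\mathbb{Z}/Q\mathbb{Z}$, H\"older to collapse to $\frac{1}{Q}\sum_\xi \phi_a(\xi/Q)^m$, Riemann-sum passage to $\int_0^1 \phi_a^m$ with error $1/K$, binomial expansion, and Lemma~\ref{lem2.1} to bound each $\ell\geq 1$ term by $K^{\ell-1}$ before summing via $\sum_\ell \binom{m}{\ell}(2K)^\ell = (2K+1)^m - 1$. The only cosmetic difference is that the paper expands binomially before passing to the integral and records the sharper $(1-2^{-(\ell-1)})K^{\ell-1}$ from Lemma~\ref{lem2.1} before discarding the extra factor, whereas you integrate first and quote the weaker $K^{\ell-1}$ directly; neither change affects the argument.
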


Let $C$ and $c$ be some constants that may change from line to line.
\begin{prop}\label{pr2.1} Suppose $d \geq 2$, $a\in\mathbb{N}$, and $a>1$. $ K =K(d)\leq d^{a-1}$ is a naturally valued function, $m_0=\frac{\sqrt{d}}{\log^2 d}$, and $f$ be a random polynomial from $\mathcal{P}_{d,K}$ with independent and uniformly distributed coefficients. 
\begin{enumerate}
\item  The probability that $f$ is divisible by at least one cyclotomic polynomial of positive degree not exceeding $m_0$ is at most $O\left(\sqrt{\frac{K}{d}}\right)$ as $d\to \infty$,
\item if $K>cd$, for some constant $c$, then the probability that $f$ is divisible by at least one cyclotomic polynomial of positive degree not exceeding $m_0$ is at most $O\left(\frac{1}{K}\right)$ as $d\to \infty$.
\end{enumerate}
\end{prop}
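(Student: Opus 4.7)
The plan is to bound $\mathbb{P}(Q_n\mid f)$ individually for each $n$ with $\phi(n)\le m_0$ and then sum. For fixed $n$, group coefficients by residue class modulo $n$: set $(c_0,\dots,c_{d-1},c_d)=(a_0,\dots,a_{d-1},1)$ and $S_r:=\sum_{i\equiv r\,(\mathrm{mod}\,n)}c_i$, so $f(x)\equiv\sum_{r=0}^{n-1}S_r x^r\pmod{x^n-1}$. Then $Q_n\mid f$ is equivalent to $\sum_{r=0}^{n-1}S_r\zeta_n^r=0$ in $\mathbb{Q}(\zeta_n)$. Expanding each $\zeta_n^r$ with $r\ge\phi(n)$ in the $\mathbb{Q}$-basis $\{1,\zeta_n,\dots,\zeta_n^{\phi(n)-1}\}$ turns this into $\phi(n)$ $\mathbb{Q}$-linear relations on $(S_0,\dots,S_{n-1})$, with $S_0,\dots,S_{\phi(n)-1}$ serving as natural pivots and each expressible as an affine function of the free variables $S_{\phi(n)},\dots,S_{n-1}$.

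The $S_r$ are mutually independent (disjoint blocks of $a_i$), so conditioning on the free variables yields
\[
\mathbb{P}(Q_n\mid f)\;\le\;\prod_{r=0}^{\phi(n)-1}\max_s\mathbb{P}(S_r=s)\;\le\;\Bigl(\max_{r,s}\mathbb{P}(S_r=s)\Bigr)^{\phi(n)}.
\]
Each $S_r$ is a sum of $\asymp d/n$ i.i.d.\ uniform $[-K,K]\cap\mathbb{Z}$ variables (up to an additive constant from $c_d$), which by Lemma~\ref{lem: pbdd} are $(1-4K/(2K+1)^2)$-bounded of exponent $2$ with $q=2K/(2K+1)^2$. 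Applying Lemmas~\ref{lem: LWO} and~\ref{lem2.2} gives the two complementary estimates
\[
\max_s\mathbb{P}(S_r=s)\;\le\;\min\!\Bigl\{\,C\sqrt{Kn/d},\;(2K+1)^{-d/n}+\tfrac{2}{K}\,\Bigr\}.
\]

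For the summation I use two standard facts: $\{n:\phi(n)=1\}=\{1,2\}$, and for $\phi\ge 2$ both $|\{n:\phi(n)=\phi\}|\le\phi^{O(1)}$ and $\max\{n:\phi(n)=\phi\}\le C\phi\log\log\phi$. \emph{Part~(2)} is then immediate: for $K\ge cd$ and $n\le Cm_0\log\log m_0\le d$ one has $(2K+1)^{-d/n}\le 1/(2K)$, hence $\mathbb{P}(Q_n\mid f)\le(3/K)^{\phi(n)}$, and summing gives $O(1/K)$ dominated by $n\in\{1,2\}$. For \emph{Part~(1)}, split on the size of $K$. If $K\le d^{1/3}$, use the first bound: the $n\in\{1,2\}$ terms contribute $O(\sqrt{K/d})$, and for $\phi(n)\ge 2$ the ratio $\sqrt{Kn/d}\le\sqrt{Cm_0\log\log m_0\cdot K/d}\to 0$, so the tail is geometric with leading term $O(K/d)\le O(\sqrt{K/d})$. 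If $d^{1/3}\le K\le d^{a-1}$, apply the Part~(2) bound to obtain $O(1/K)\le O(\sqrt{K/d})$.

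The main obstacle is verifying the geometric decay of the $\phi(n)\ge 2$ tail in the first sub-case of Part~(1), uniformly over all $n$ with $\phi(n)=\phi$. This is exactly where the normalization $m_0=\sqrt{d}/\log^2 d$ enters essentially: combined with the bound $n\le C\phi\log\log\phi\le Cm_0\log\log m_0$, it forces $Kn/d\to 0$ throughout the relevant range, allowing one to control the contribution of all higher cyclotomic polynomials by the leading $n\in\{1,2\}$ terms.
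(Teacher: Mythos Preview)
Your argument is essentially the paper's: reduce $f$ modulo $x^n-1$, observe that $Q_n\mid f$ pins down $\phi(n)$ of the independent residue-class sums $S_r$ as (integer-affine) functions of the remaining ones, bound each $\max_s\mathbb P(S_r=s)$ via Lemma~\ref{lem: LWO} for Part~(1) and Lemma~\ref{lem2.2} for Part~(2), and then sum over $n$ grouped by $\phi(n)$ using $n\le C\phi(n)\log\log\phi(n)$. Your case split at $K\asymp d^{1/3}$ in Part~(1) is a mild sharpening of the paper's treatment --- the paper applies the uniform Lemma~\ref{lem: LWO} bound $C\sqrt K/d^{1/4}$ throughout, which ceases to be $<1$ once $K\gg\sqrt d$, whereas your switch to the Lemma~\ref{lem2.2} estimate for larger $K$ (note that its conclusion only needs $d/n\ge 1$, not the full hypothesis $K\ge cd$, so invoking it for $K\ge d^{1/3}$ is legitimate) keeps every term below $1$ and makes the geometric-tail claim clean.
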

\begin{proof}
Denote by $N_{n}$ the probability of the event that a random polynomial $f \in \mathcal{P}_{d,K}$ is divisible by $Q_n$. \newline
Thus, $N_{1}$ and $N_{2}$ are the probability that a polynomial $f \in \mathcal{P}_{d,K}$ is divisible by $(x-1)$ or $(x+1)$. Note that a random polynomial $f$ is divisible by $(x-1)$ or $(x+1)$ if and only if $f(1)=0$ or $f(-1)=0$ respectively.
Let us first estimate the probability of being divisible by $(x-1)$. In other words, we need to calculate the probability that a sum of coefficients $a_k$ adds up to $0$. From Lemma \ref{lem: pbdd} it follows that, $a_k$ is $\Big{(}1-\frac{4K}{(2K+1)^2}+\epsilon\Big{)}$-bounded of exponent 2 with coefficient $q= \frac{2K}{(2K+1)^2}$ for all $k\in\{0,1,\dots d\}$. From Lemma \ref{lem: LWO}, for the first statement, and Lemma \ref{lem2.2}, for the second statement, it follows that 
\begin{equation*}
\mathbb{P}\left(\sum^d_{k=0}{a_k}=0\right)\leq C \sqrt{\frac{(2K+1)^2}{Kd}} \text{    and   } \mathbb{P}\left(\sum^d_{k=0}{a_k}=0\right)\leq\frac{C}{K}    
\end{equation*}
Since $\{a_k\}$ are symmetric random variables, thus, $\{-a_k\}$ are also $\Big{(}1-\frac{4K}{(2K+1)^2}+\epsilon\Big{)}$-bounded of exponent 2 with coefficient $q= \frac{2K}{(2K+1)^2}$. Therefore, plugging them into the spots that correspond to even degrees of $x$ would imply that a similar argument works to estimate the probability of the event where $f$ is divisible by $(x+1).$
Thus, 
\begin{equation*}
N_{1}+N_{2}\leq O\left(\sqrt{\frac{K}{d}}\right)\text{    and   } N_{1}+N_{2}\leq O\left(\frac{1}{K}\right).
\end{equation*}
For $n \geq 3$ let 
 $f(z)=\sum_{j=0}^d a_jz^j$ 
be a polynomial in $\mathcal{P}_{d,K}$ and let 
$h(z)=\sum_{j=0}^{n-1} A_jz^j$,
where 
\begin{equation}\label{eq: A_j}
A_j=\sum_{k\equiv j (\textrm{mod}\, n)} a_j.
\end{equation}

In the next step we are following \cite[Section 4]{Konyagin}. The  polynomials $f$ and $h$ are congruent $\mod(z^n-1)$. Thus, $h$ is also divisible by $Q_n$(z). By \cite[Theorem 5.1]{Prachar} \begin{equation*}
n<C\phi(n)\log\log(\phi(n)+2).
\end{equation*} 
The coefficients $A_j$ are determined by the divisibility of $h$ by $Q_n$. Thus from the definition of $A_j$ it implies that all sums of fixed $a_k (k\equiv j (\textrm{mod }{n})) \phi(n)\leq j<n)$ are also determined. Fixing these coefficients also determines the other $a_k$, due to divisibility of $h$ by $Q_n.$ From Lemma \ref{lem: pbdd} it follows that these $a_k$ are $\Big{(}1-\frac{4K}{(2K+1)^2}+\epsilon\Big{)}$-bounded of exponent $2$ with coefficient $q= \frac{2K}{(2K+1)^2}$. \newline

Therefore, to prove the first statement of Proposition \ref{pr2.1}, we apply Lemma \ref{lem: LWO} with $m=d/n$, which gives us an upper bound of $C \sqrt{2n(2K+1)^2/2Kd}$ on the proportion of vectors $(a_j,a_{j+n},\dots,a_{j+[(d-j)/n]})$ to satisfy \eqref{eq: A_j}.
As in \cite{Konyagin} we notice that
\begin{equation*}
C \sqrt{2n(2K+1)^2/2Kd} \leq 2C \sqrt{ K m_0\log\log(m_0+2)/d} \leq \frac{C \sqrt{K}}{d^{1/4}}.
\end{equation*}
So $N_n\leq C\left(\frac{K}{\sqrt{d}}\right)^{l/2}$ where $l=\phi(n)>2$, therefore
\begin{align*}
&\mathbb{P} (f \in \mathcal{P}_{d,K} \text{ is divisible by at least one cyclotomic polynomial of degree up to } m_0 )\\
&\qquad\leq N_{1}+N_{2}+ C\left(\sum^{m_0}_{l=3} \#\{n:\phi(n)=l\}\left(\frac{K}{\sqrt{d}}\right)^{l/2}\right)\notag\\
&\qquad\leq O\left(\sqrt{\frac{K}{d}}\right)+\left(\sum^{m_0}_{l=3} \frac{ 2Cl \sqrt{\log\log(l+2)}\sqrt{K^l}}{d^{l/4}}\right)\notag\\
&\qquad\leq O\left(\sqrt{\frac{K}{d}}\right).\notag
\end{align*}

Thus,
the probability that $f$ is divisible by at least one cyclotomic polynomial of positive degree not exceeding $m_0$ is at most $O \left(\sqrt{\frac{K}{d}}\right).$

Finally, the proof of the second inequality is similar to the one above, however we apply Lemma \ref{lem2.2} with setting $m=d/n$ instead of Lemma \ref{lem: LWO} to get an upper bound of $C/K$ on the probability of vectors $\left(a_j,a_{j+n},\dots,a_{j+[(d-j)/n]}\right)$ to satisfy \eqref{eq: A_j}.
Same as above we notice that for $K$ large enough we have that
$N_n\leq O\left(K^{-l}\right)$ where $l=\phi(n)>2$
Therefore,
\begin{align*}
&\mathbb{P} \left(f \in \mathcal{P}_{d,K} \text{ is divisible by at least one cyclotomic polynomial} \right)\\
&\qquad\leq N_{1}+N_{2}+\left(\sum^{m_0(d)}_{l=3} \frac{\#\{n:\phi(n)=l\}}{K^{l}}\right)\notag\\
&\qquad\leq O\left(\frac{1}{K}\right)+\left(\sum^{m_0(K)}_{l=3} \frac{ 2Cl\sqrt{\log\log(l+2)}}{K^{l}}\right)\notag\\
&\qquad\leq O\left(\frac{1}{K}\right).\notag
\end{align*}

So as long as $K>cd$ the probability that $f$ is divisible by at least one cyclotomic polynomial of positive degree not exceeding $m_0$ is at most $O\left(\frac{1}{K}\right).$
\end{proof}

\begin{question}[Factors of low degree]
The proof of Proposition~\ref{pr2.1} suggests that most of the contribution to the probability that $f$ is divisible by at least one cyclotomic polynomial come from being divisible by $N_1$ and $N_2$. This was established in \cite{lowdeg}. Thus, it is of interest to determine what is the next likeliest cyclotomic polynomial to divide $f$. One could also investigate more carefully an upper bound of the probability of $f$ to be divisible by a non-linear cyclotomic polynomial of a degree not exceeding $m_0$, and possibly improve our bound.
\end{question}
\begin{prop} \label{pr2.2} Suppose $d \geq 2$, $K =K(d)\leq d^{a-1}$ is a naturally valued function, and $m_0=\frac{\sqrt{d}}{\log^2 d}$. Also let $f$ be a random polynomial from $\mathcal{P}_{d,K}$ with independent and uniformly distributed coefficients. Then, the probability of $f$ being divisible by a noncyclotomic polynomial of positive degree not exceeding $m_0$ is at most $O\left((2K+1)^{-\frac{ Cd}{(\log d)^4}}\right)$ for some positive constant $C$.
\end{prop}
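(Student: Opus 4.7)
My plan is to adapt Konyagin's strategy \cite[Section 4]{Konyagin} from the $\{0,1\}$ setting to the range $[-K, K]$. I would stratify the bad event by the degree $k \in \{1, \ldots, \lfloor m_0 \rfloor\}$ of an irreducible non-cyclotomic factor $g$ of $f$; the proof then reduces to enumerating pairs $(g, h)$ of monic integer polynomials of degrees $k$ and $d-k$ with $f = gh \in \mathcal{P}_{d,K}$, bounding the number of each, and summing over $k$.

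To control $g$, I would combine Landau's inequality $M(f) \le \|f\|_{2} \le \sqrt{d+1}(K+1)$ with Mignotte's bound $\|g\|_\infty \le 2^{k} M(g) \le 2^{k} M(f)$ to obtain the height estimate $\|g\|_\infty \le 2^{k}\sqrt{d+1}(K+1)$; this yields at most $N_k := (C_0\, 2^{k} K\sqrt{d})^{k}$ monic integer candidates of degree $k$. To control $h$ for fixed $g$, the divisibility $g \mid f$ is equivalent to the $k$ linear conditions $f(\alpha_i) = 0$ at the roots $\alpha_1, \dots, \alpha_k$ of $g$; a lattice-point count on $(a_0, \dots, a_{d-1}) \in [-K,K]^d \cap \mathbb{Z}^d$, using the Toeplitz-determinant asymptotics $\det(C_g^\top C_g) \approx M(g)^{2(d-k)}$ (Szeg\H{o}'s theorem), or equivalently the product-of-spreads estimate $\prod_i \sum_{j=0}^{d-1}|\alpha_i|^j \approx M(g)^{d}$, yields at most $C_1 (2K+1)^{d-k}/M(g)^{d-k}$ valid cofactors $h$, up to polynomially bounded corrections.

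The key number-theoretic input is Dobrowolski's theorem: for non-cyclotomic $g$ of degree $k \ge 2$, $M(g) \ge 1 + c(\log\log k/\log k)^{3} \ge 1 + c'/\log^{4}k$, so $M(g)^{d-k} \ge \exp(c''(d-k)/\log^{4}k) \ge \exp(c''' d/\log^{4} d)$ for $k \le m_0$; linear non-cyclotomic $g(x) = x+a$ with $|a|\ge 2$ provide the even larger savings $M(g)^{d-1} \ge 2^{d-1}$. Assembling these estimates and dividing by the trivial total $(2K+1)^d$ gives
\begin{equation*}
\sum_{k=1}^{\lfloor m_0 \rfloor} \frac{N_k \cdot C_1 (2K+1)^{d-k}/M(g)^{d-k}}{(2K+1)^d} \;\ll\; m_0 \exp\!\bigl(O(d/\log^{4} d) - c''' d/\log^{4} d\bigr),
\end{equation*}
where the polynomial overhead $(C_0\, 2^{m_0}\sqrt{d})^{m_0} = \exp(O(d/\log^{4} d))$ is absorbed into a slightly smaller Dobrowolski constant. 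This yields the asserted bound $O\!\bigl((2K+1)^{-Cd/\log^{4}d}\bigr)$.

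The main obstacle will be the lattice-point count for $h$: translating the Toeplitz-determinant asymptotics into a genuine integer count inside $[-K,K]^{d+1}$ requires care with boundary effects and with subspaces of non-generic orientation, and the quantitative form of Dobrowolski's inequality has to be sharp enough to overcome the enumeration cost of the candidates $g$. Konyagin handles the analogous step in the $\{0,1\}$ case via a dyadic decomposition of coefficients; a parallel dyadic argument combined with the Szeg\H{o} estimate, or alternatively an appeal to $M(gh)=M(g)M(h)$ together with known counts of monic polynomials of bounded Mahler measure (Chern--Vaaler), should complete the step.
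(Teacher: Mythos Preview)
Your route is genuinely different from the paper's, and the step you flag as ``the main obstacle'' is in fact a real gap. The paper (following Konyagin verbatim) never counts cofactors $h$ at all. Instead it uses a rigidity trick taken from Dobrowolski's paper: one chooses a prime $p$ with $\log(2Kd+1)/\lambda_{m_0}<p<2\log(2Kd+1)/\lambda_{m_0}$ such that the $p$-th powers of the roots of $g$ are still algebraic of degree $m$, so that $g_p(w)=\prod_j(w-z_j^p)$ has degree $m$. If $f_1,f_2\in\mathcal{P}_{d,K}$ are both divisible by $g$ and agree at every index $j\not\equiv 0\pmod p$, then $f_1-f_2=h(z^p)$ with $M(h)\le\|h\|_1\le 2Kd+1$; but $g_p\mid h$ forces $M(h)\ge M(g_p)=M(g)^p>\exp(p\lambda_{m_0})>2Kd+1$, a contradiction. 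Hence any $f$ divisible by $g$ is already determined by its coefficients at indices not divisible by $p$, giving $\#\{f\in\mathcal{P}_{d,K}:g\mid f\}\le(2K+1)^{d-d/p}$ directly, with the $(2K+1)$-dependence required by the statement built in. The enumeration of $g$'s is then handled by the crude bound $(2K+1)^{m_0^2}$, and the two combine to $(2K+1)^{-Cd/\log^4 d}$. Your description of Konyagin's argument as ``a dyadic decomposition of coefficients'' is not what he does; the prime-$p$ rigidity is the whole mechanism.

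None of the tools you propose actually yields the bound $\#\{h:gh\in\mathcal{P}_{d,K}\}\lesssim(2K+1)^{d-k}/M(g)^{d-k}$. Szeg\H{o}'s theorem is an asymptotic for Toeplitz determinants and gives no uniform upper bound on lattice points of a rank-$(d-k)$ sublattice inside $[-K,K]^d$; the geometry-of-numbers error depends on the successive minima, not on the covolume alone. The route via $M(gh)=M(g)M(h)$ and Chern--Vaaler is worse: passing from $\|gh\|_\infty\le K$ to $M(h)\le\sqrt{d+1}\,K/M(g)$ and then counting $h$'s of bounded Mahler measure costs a factor $(\sqrt{d+1})^{d-k}=\exp(\Theta(d\log d))$, which dwarfs the Dobrowolski saving $M(g)^{d-k}=\exp(O(d(\log\log d)^3/\log^3 d))$. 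Finally, even granting your lattice count, your weakening of Dobrowolski to $M(g)\ge 1+c'/\log^4 k$ makes the saving $\exp(\Theta(d/\log^4 d))$, which is only the \emph{same} order as your enumeration overhead $(C_0 2^{m_0}\sqrt d)^{m_0}=\exp(\Theta(d/\log^4 d))$; you would need to retain the full $(\log\log k)^3$ to be sure the saving wins. The paper's prime-$p$ argument avoids every one of these issues.
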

\begin{proof}
This proof is modeled on Konyagin's proof of \cite[Theorem 2]{Konyagin}. \newline
First, let $m>2$ be an integer and define 
\begin{equation*} 
g(z)=\sum_{j=0}^m b_jz^j \text{ such that } b_m=1
\end{equation*}
and
\begin{equation*}
M(g)=\prod_{j=1}^m \max(1,|z_j|),
\end{equation*}
where $z_j$ are roots of $g$ counted with multiplicity for $1\leq j \leq m$. \newline
We know that
\begin{equation*}
\log M(g)=\prod_{j=1}^m \frac1{2\pi} \int_0^{2\pi} \log|g(e^{i\phi})|d\phi
\end{equation*}
So, by Jensen's inequality
\begin{equation}\label{eq: upper bd of M}
1 \leq M\left(\sum_{j=0}^m b_jz^j\right) \leq \sum_{j=0}^m |b_j|.
\end{equation}
If $M(g)=1$ then by Kronecker's theorem \cite{kronecker} all $z_j$ are roots of unity. Otherwise from \cite{Dobrowolski} we get 
\begin{equation*}
\exp(\lambda_m) \leq M(g), \textit{ where } \lambda_m = c\left(\frac{\log \log m}{\log m}\right)^3.    
\end{equation*}
Now we will consider the case where our polynomial $f$ is divisible by a noncyclotomic polynomial $g$ such that $\deg(g)=m\leq m_0$. From \cite[Lemma 3]{Dobrowolski} we know that there exists a prime number $p$ such that ${\log(2Kd+1)}\frac{1}{\lambda_{m_0}}< p < 2\log(2Kd+1)\frac {1}{\lambda_{m_0}}$ and such that all $m$ roots of $g$ raised to the power of $p$ are algebraic numbers of degree $m$. Note, this also implies that they are distinct.
Then, if $g_p(w)=\prod^m_{j=1} (w-z^p_j)$ we have
\begin{equation}\label{eq: lower bd of M}
M(g_p)=M(g)^p \geq \exp(p\lambda_{m_0}) > 2Kd+1.
\end{equation}
Suppose that $g(z)$ divides both $f_1$ and $f_2$, which are two distinct polynomials from $\mathcal{P}_{d,K}$ and 
\begin{equation*}
f_1(z)-f_2(z)=\sum^{ [d/p]}_{j=0}a_jz^{jp}=h(z^p).
\end{equation*}

We know that coefficients of the polynomial h are in $[-2K;2K]$. Thus, inequality \eqref{eq: upper bd of M} implies that $M(h)\leq 2Kd+1$. On the other hand, every root $z_j^p$ of $g_p$ is also a root of $h$. Hence, $g_p$ divides $h$ and by the lower bound on $M(g)$ as in \eqref{eq: lower bd of M} we can see that $M(h)>2Kd+1$, which gives us a contradiction. Therefore, if $f \in \mathcal{P}_{d,K}$ is divisible by $g$ then $f$ is uniquely determined by its coefficients $a_j$, where $j \neq 0 (\text{mod } p)$. \newline
Thus,
\begin{align}
\# \{ f \in \mathcal{P}_{d,K}: g\mid f\}&\leq\frac{(2K+1)^d}{(2K+1)^{d/p}} \\
&\leq\frac{(2K+1)^d}{(2K+1)^{\,d\,\lambda_{m_0}/2\log(Kd+1)}} \notag\\
&\leq(2K+1)^d\exp \left(\frac{-\log(2K+1)\,d\,\lambda_{m_0}}{2\log Kd} \right).\notag
\end{align}

To estimate the probability that polynomials $g$ of $\deg g \leq m_0$ divides at least one $f \in \mathcal{P}_{d,K},$ we consider any such polynomial
\begin{equation*}
g(z)=\sum_{j=0}^m b_jz^j=\prod^{m}_{j=1}(z-z_j).
\end{equation*}
Since $|z_j|<K+1$ for every $j$, representing the coefficients of the polynomial g as symmetric polynomials of its zeros, we find $|b_j|\leq K^{m-j} {m\choose j} <(2K)^m$.  \newline
Thus,
\begin{equation*}
\# \{ g: \exists f \in \mathcal{P}_{d,K} \text{ such that } g\mid f\}\leq\prod^{m_0}_{j=0}(2(2K)^{m_0}+1)\leq(2K+1)^{{m_0}^2}.
\end{equation*}
Recall that $\lambda_m = c\left(\frac{\log \log m}{\log m}\right)^3$ and $m_0=\frac{\sqrt{d}}{(\log(d))^2}$, hence the following inequality hold for sufficiently large $d$:
\begin{align}\label{eq: gl}
    \lambda_{m_{0}}= c\left(\frac{\log \log (\sqrt{d}/(\log d)^2)}{\log (\sqrt{d}/(\log d)^2)}\right)^3\leq
    8c\left(\frac{\log(\log({d}^{1/4})}{\log d}\right)^3
\end{align}
Using these inequalities we derive upper bound on probability of interest
\begin{align}\label{eq:final of 2.2}
&\mathbb{P} \left( f \in \mathcal{P}_{d,K} \text{is divisible by at least one noncyclotomic polynomial of degree} \leq m_0\right)\\
&\qquad\qquad\leq (2K+1)^{{m_0}^2}\exp \left( \frac{-\log(2K+1)\,d\,\lambda_{m_0}}{2\log Kd}\right) \notag\\
&\qquad\qquad=\exp\left(d\,\log(2K+1)\left(\frac{1}{(\log d)^4}-\frac{\lambda_{m_0}}{2\log Kd}\right)\right)\notag\\
&\qquad\qquad\leq\exp\left(d\,\log(2K+1)\left(\frac{1}{(\log d)^4}-\frac{c}{a}\cdot\frac{\lambda_{m_0}}{\log d}\right)\right)\notag
\end{align}
Where we used that $K\leq d^{a-1}$ for some natural number $a>1$. Using the inequality in~\eqref{eq: gl} we get
\begin{align*}
&\qquad\leq \exp\left( \frac{d\log(2K+1)}{\log d}\left(\frac{1}{(\log d)^3}- \frac{8c}{a}\left(\frac{\log(\log({d}^{1/4})}{\log d}\right)^3\right)\right)\notag\\
&\qquad\leq \exp\left( {d\log(2K+1)}\left(\frac{- c(\log (\log({d}^{1/4})))^3}{(\log d)^4}\right)\right)\notag\\
&\qquad\leq \exp\left( {d\log(2K+1)}\left(\frac{- C}{(\log d)^4}\right)\right)\notag\\
&\qquad\leq (2K+1)^{\left(-\frac{Cd}{(\log d)^4}\right)},\notag
\end{align*}
where these inequalities hold for sufficiently large $d$ and some positive constant $C$. This completes the proof. 
\end{proof}
Proposition~\ref{pr2.1} and~\ref{pr2.2} put together give the proof of Theorem~\ref{th:main}.
\begin{proof}[Proof of Theorem~\ref{th:main}]
Statement of Theorem~\ref{th:main} follows directly from Proposition \ref{pr2.1}, Proposition \ref{pr2.2}, and the fact that
\begin{equation*}\label{eq: connection}
    O\left((2K+1)^{\left(\frac{- Cd}{(\log d)^4}\right)}\right) \leq \min\left\{O\left(\sqrt{\frac{K}{d}}\right);O\left(\frac{1}{K}\right)\right\}.
\end{equation*}
\end{proof}

Theorem~\ref{th:main2} follows from the analogous argument to the one above with a slight modification in the non-cyclotomic case.

\begin{proof}[Proof of Theorem~\ref{th:main2}]
Since $$m_0=\frac{\sqrt{d}}{\log^2(d)}\geq\frac{\sqrt{d}}{\log^2(Kd)}=m_1$$ we know that probability that $f\in\mathcal{P}_{d,K}$ is divisible by a cyclotomic polynomial up to degree $m_0$ is greater or equal to the probability that it is divisible by such a polynomial of degree $m_1$. So by Proposition \ref{pr2.1} we know that
\begin{equation*}
    \mathbb{P} (f \in \mathcal{P}_{d,K} \text{ is divisible by at least one cyclotomic polynomial of degree up to } m_1 )\leq O\left(\sqrt{\frac{K}{d}}\right)
\end{equation*}
and if $e^{(d^{0.25}/b)}/d> K\geq cd$, then
\begin{equation*}
    \mathbb{P} (f \in \mathcal{P}_{d,K} \text{ is divisible by at least one cyclotomic polynomial of degree up to } m_1 )\leq O\left(\frac1{K}\right).
\end{equation*}
Now to prove analog of Proposition \ref{pr2.2} we need to make sure that $\lambda_{m_1}$ is well defined. In fact, this is what defines an upped bound of $e^{(d^{0.25}/b)}/d>K(d)$, where $b:=\exp\left(e^{1/\sqrt[3]{4c_1 }}/2\right)$ for some constant $c_1$. By the same argument as in the proof of Proposition \ref{pr2.2} up until the equation \eqref{eq:final of 2.2} we get that
\begin{align*}\label{eq:final of 2.8}
&\mathbb{P} \left( f \in \mathcal{P}_{d,K} \text{is divisible by at least one noncyclotomic polynomial of degree} \leq m_1\right)\\
&\qquad\qquad\leq (2K+1)^{{m_1}^2}\exp \left( \frac{-\log(2K+1)d\lambda_{m_1}}{2\log (Kd)}\right) \notag\\
&\qquad\qquad=\exp\left(\log(2K+1)\left(\frac{d}{(\log (Kd))^4}-\frac{d\lambda_{m_1}}{2\log (Kd)}\right)\right)\notag\\
&\qquad\qquad=\exp\left(\frac{\log(2K+1)d}{\log(Kd)}\left(\frac{1}{(\log (Kd))^3}-\frac{\lambda_{m_1}}{2}\right)\right) \notag
\end{align*}
Now, since $K<e^{(d^{0.25}/b)}/d$, where $b=\exp(e^{1/\sqrt[3]{4c}}/2)$, where $c$ is the constant in the definition of $\lambda_m$. Note that $\log(m_1)>1$ and $(\log\log(m_1))^3>\frac1{4c}$ for sufficiently large $d$, so we get that
\begin{align*}
&\qquad\qquad\leq \exp\left( \frac{d\log(2K+1)}{\log (Kd)}\left(\frac{1}{(\log (Kd))^3}- \frac{c}{2}\left(\frac{\log \log (m_1)}{\log (m_1)}\right)^3\right)\right)\\
&\qquad\qquad\leq \exp\left( \frac{d\log(2K+1)}{\log (Kd)}\left(\frac{1}{(\log (Kd))^3}- \frac{8c}{2}\left(\frac{\log\log(m_1)}{\log d }\right)^3\right)\right)\notag\\
&\qquad\qquad\leq \exp\left( \frac{d\log(2K+1)}{\log (Kd)}\left(\frac{1}{(\log (Kd))^3}- 4c\frac{(\log\log(m_1))^3}{(\log (Kd))^3}\right)\right)
\end{align*}
Using that for $d$ large enough $(\log\log(m_1))^3>\frac1{4c}$ we have that for some some constant $C>0$ 
\begin{align*}
&\qquad\qquad\leq \exp\left( {d\log(2K+1)}\left(\frac{- C}{(\log (Kd))^4}\right)\right)\\
&\qquad\qquad= (2K+1)^{\left(\frac{- Cd}{(\log (Kd))^4}\right)}\notag\\
&\qquad\qquad\leq \min\left\{O\left(\sqrt{\frac{K}{d}}\right);O\left(\frac{1}{K}\right)\right\} \quad \text{as } d\to\infty.
\end{align*}
This implies the desired result.
\end{proof}
%
%
\section{Hilbert's irreducibility theorem with different parameters}\label{Hilbert th}
In this section we present an alternative way of studying irreducibility of polynomials as both range of coefficients and degree tend to infinity. This approach based on a proof of classical Hilbert's irreducibility theorem that I learned from Philip Matchett Wood in private conversations \cite{wood}.  
\begin{thm}\label{th3.1} Let $f(x) \in \mathcal{P}_{d,K}$. We have $\mathbb{P}(f(x) \text{ is irreducible }) \to 1$  as $d$ and $K \to \infty$ as long as $$d \leq \log_2(K)-\log_2(1+(\log K)^2).$$
\end{thm}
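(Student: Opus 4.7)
The plan is to count reducible polynomials in $\mathcal{P}_{d,K}$ via their factorization structure and show that, under the stated hypothesis, they form a vanishing proportion of $\mathcal{P}_{d,K}$ as $d, K \to \infty$. If a monic $f \in \mathcal{P}_{d,K}$ is reducible, it factors as $f = gh$ with $g, h \in \mathbb{Z}[x]$ monic of positive degree; I would partition the reducible $f$'s by $k := \deg g$, where without loss of generality $1 \leq k \leq \lfloor d/2 \rfloor$.

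For each $k \geq 2$, I would bound the reducible $f$'s with a monic integer factor of degree $k$ in two stages. First, restrict the possible $g$'s: every root $\alpha$ of $f \in \mathcal{P}_{d,K}$ satisfies $|\alpha| \leq K+1$ by Cauchy's root bound for monic polynomials, so the roots of any divisor $g$ lie in the same disk. Writing the coefficients of $g$ as (signed) elementary symmetric functions of its roots yields $|g_j| \leq \binom{k}{j}(K+1)^{k-j}$, bounding the number of candidate $g$'s (this can equivalently be derived from Mignotte's inequality $\|g\|_\infty \leq 2^k \sqrt{d+1}\,K$). Second, for each fixed $g$, the quotient $h = f/g$ is uniquely determined by $f$; moreover, the non-leading coefficients $h_{d-k-1}, \dots, h_0$ can be solved for successively from $a_{d-1}, \dots, a_k$ via the recursion coming from polynomial multiplication, so at most $(2K+1)^{d-k}$ polynomials in $\mathcal{P}_{d,K}$ are divisible by any given $g$. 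Summing over $k$ and dividing by $|\mathcal{P}_{d,K}| = 2K(2K+1)^{d-1}$, the hypothesis $d \leq \log_2 K - \log_2(1 + (\log K)^2)$, equivalent to $2^d(1 + (\log K)^2) \leq K$, is precisely what is needed to force the resulting ratio to $0$.

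The main obstacle is the linear factor case $k = 1$, where the naive enumeration above gives an $\Omega(1)$ bound and does not suffice. I would handle this case separately via anti-concentration. An integer root $r$ satisfies $|r| \leq K+1$ and $r \mid a_0$ (rational root theorem), and for each fixed $r \neq 0$ the event $f(r) = 0$ forces $a_0 = -\sum_{j \geq 1} a_j r^j$, which has probability at most $1/(2K)$ since $a_0$ is uniform on $2K$ admissible values once the remaining coefficients are fixed. Combining this with the sharper estimate $\mathbb{P}[f(r) = 0] \lesssim |r|^{-(d-1)}$ for $|r| \geq 2$, obtained from an anti-concentration bound applied to $\sum_{j=1}^{d} a_j r^j$, and splitting the union bound over $r$ at a threshold $|r| \sim K^{1/(d-1)}$, should yield $\mathbb{P}[f \text{ has a linear factor}] = o(1)$ under the stated growth condition, completing the argument.
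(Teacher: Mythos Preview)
Your approach for factors of degree $k \ge 2$ does not yield a useful bound. Take $k = 2$: the elementary-symmetric-function estimate gives $|g_0| \le (K+1)^2$ and $|g_1| \le 2(K+1)$, so the number of candidate monic quadratic $g$'s is of order $K^3$. Multiplying by your per-$g$ bound $(2K+1)^{d-2}$ and dividing by $|\mathcal{P}_{d,K}| = 2K(2K+1)^{d-1}$ produces a ratio of order $K$, which diverges. The Mignotte variant $\|g\|_\infty \le 2^k\sqrt{d+1}\,K$ fares no better: for $k=2$ it gives roughly $(8\sqrt{d+1}K)^2$ candidates and a ratio of order $d$. The root/height bounds on $g$ are simply too coarse; they ignore the arithmetic constraint that $g(0)$ must \emph{divide} $f(0)\in[-K,K]\setminus\{0\}$, which is what cuts the number of admissible constant terms of $g$ from $\Theta(K^2)$ down to a divisor-function quantity. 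Your assertion that the hypothesis ``is precisely what is needed to force the resulting ratio to $0$'' is therefore not supported by the counting you set up.

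The paper's argument exploits exactly this divisibility. It partitions reducible $f$'s by the constant term $k=f(0)$ rather than by the degree of a factor. For each $k\neq 0$, any factorization $f=gh$ forces $g(0)h(0)=k$, leaving at most $2\tau(|k|)$ choices for the pair $(g(0),h(0))$. The remaining middle coefficients of $g$ and $h$ are then counted modulo a prime $p\in[2K+1,4K]$ supplied by Bertrand's postulate: since $p>2K$, distinct $f\in\mathcal{P}_{d,K}$ stay distinct modulo $p$, and there are at most $p^{a-1}\cdot p^{b-1}=p^{d-2}\le(4K)^{d-2}$ ways to fill in those coefficients. Summing over $k$ via Dirichlet's estimate $\sum_{k\le K}\tau(k)=K\log K+O(K)$ gives a reducibility probability of order $2^{d}(1+\log K)/K$, which tends to $0$ under the stated hypothesis. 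No separate treatment of linear factors is needed, so your anti-concentration detour for $k=1$ is unnecessary --- and the claimed bound $\mathbb{P}[f(r)=0]\lesssim |r|^{-(d-1)}$ would itself require a careful proof.
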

\begin{proof} We first estimate the number of reducible polynomials in $\mathcal{P}_{d,K}.$ By Bertrand’s Postulate, there is $p,$ which is a prime number, such that $2K+1\leq p\leq4K.$ Let $N_k$ be the number of reducible polynomials $f(x)$, such that $f(0)=k$ for each $k\in[-K,K].$ Since there are at most $(2K+1)^{d-1}$ polynomials with constant coefficient $0$ we know that
$$N_0\leq(2K+1)^{d-1}.$$
For $k\neq0$, there are $a,b>0$ such that $a+b=d$ and
\begin{equation}\label{eq:poly}
    f(x)=(x^a+n_{a-1}x^{a-1}+\cdots+n_1x+k_1)(x^b+m_{b-1}x^{b-1}+\cdots+m_1x+k_2).
\end{equation}
From the fact that the product of $k_1$ and $k_2$ is equal to $k$ it follows that the number of such pairs $(k_1,k_2)$ is at most $2\tau(k)$, where $\tau(k)$ is the number of positive divisors of $k.$ We also notice that polynomials factor in the same way as above viewing \eqref{eq:poly} modulo $p$. Since we chose $p\geq 2K+1$, then any distinct factorizations of $f(x)$ over the integers would remain distinct modulo $p$. Thus, when we estimate $N_k$ for $k\neq0$, we can consider factorization of polynomials modulo $p$ instead of over the integers. Modulo $p,$ there are at most $p^{a-1}$ possible coefficients $n_i$ and $p^{b-1}$ possible coefficients $m_i$. Thus,
\begin{align*}
    \#(\text{reducible polynomials in }\mathcal{P}_{d,K})&=\sum_{k=-K}^KN_k\\
    &\leq(2K+1)^{d-1}+2\sum_{k=1}^Kp^{a+b-2}2\tau(k)\notag\\
    &\leq(4K)^{d-2}\left(4K+4\sum^K_{k=1}\tau(k)\right).
\end{align*}
Now by a theorem proved by Dirichlet \cite[Theorem 3.3]{Apostol} we have that
\begin{equation*}
\sum^K_{k=1}\tau(k)=K\log K+O(K),
\end{equation*}
hence
\begin{align*}
\mathbb{P}(f(x) \text{is reducible})&\leq \frac{(4K)^{d-2}(4K+O(K\log K))}{(2K+1)^d} \\
&\leq \frac{(4K)^{d-1}}{(2K)^d} +O\left(\frac{4^{d-2}K^{d-1}\log K}{(2K)^d}\right) \notag\\
&\leq O\left(\frac{2^{d-2}}{K} + \frac{2^{d-4}\log K}{K}\right)\notag\\
& \leq O \left(\frac{2^{d}(1+\log K)}{K}\right)\notag
\end{align*}
which goes to zero as $ K \to \infty$ as long as $d= o\left(\log_2K-\log_2(1+\log K)\right).$
This completes the proof.
\end{proof}

\begin{thm}\label{th3.2}Let $f_d(x)=x^d+a_{d-1}x^{d-1}+a_{d-2}x^{d-2}+\ldots+a_1x+a_0$ be a random polynomial of degree $d$ with integer coefficients $a_i$, where $a_i$ are i.i.d. centered binomial random variables with parameters $4^{2d}$ and $p=\frac12$, then $$\mathbb{P}(f_d (x) \text{ is irreducible over }\mathbb{Z}) \to 1\quad \text{as}\quad  d \to \infty$$
\end{thm}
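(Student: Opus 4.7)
The plan is to adapt the proof of Theorem~\ref{th3.1} to the centered binomial setting. Write $\sigma := 2^{2d-1}$, so that $\sigma^2 = 4^{2d}/4$ is the variance of each coefficient $a_i$. Two standard facts about the centered binomial are used throughout: by Stirling's formula, the pointwise density satisfies
\begin{equation*}
\max_{v \in \mathbb{Z}} \mathbb{P}(a_i = v) \;=\; \binom{4^{2d}}{2\cdot 4^{2d-1}}\Big/2^{4^{2d}} \;\leq\; C_1/\sigma
\end{equation*}
for an absolute constant $C_1$, and Hoeffding's inequality yields the tail estimate $\mathbb{P}(|a_i| > t) \leq 2\exp(-t^2/(2\sigma^2))$.

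First I localize: set $M := \sigma\sqrt{c\log d}$ with $c > 2$; a union bound together with Hoeffding shows that the event $E := \{\|f\|_\infty \leq M\}$ holds with probability $1-o(1)$, so it suffices to estimate $\mathbb{P}(f \text{ reducible}, E)$. I then bound this by summing over $f_0$ in the box:
\begin{equation*}
\mathbb{P}(f \text{ reducible}, E) \;\leq\; \sum_{\substack{f_0 \text{ reducible} \\ \|f_0\|_\infty \leq M}} \mathbb{P}(f = f_0) \;\leq\; \#\{\text{reducible } f_0\}\cdot (C_1/\sigma)^d.
\end{equation*}
The counting factor is obtained by running the mod-$p$ argument of Theorem~\ref{th3.1} verbatim: choose a prime $p \in [2M+1, 4M]$ via Bertrand's postulate, separate $f_0(0)=0$ from $f_0(0)\ne 0$, and invoke Dirichlet's divisor estimate $\sum_{k\leq M}\tau(k) = M\log M + O(M)$ to obtain
\begin{equation*}
\#\{\text{monic reducible }f_0 \in \mathbb{Z}[x] : \deg f_0 = d,\; \|f_0\|_\infty \leq M\} \;\leq\; O(dM^{d-1}\log M).
\end{equation*}

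The main obstacle is that after substituting $M = \sigma\sqrt{c\log d}$ into the product $O(dM^{d-1}\log M)\cdot(C_1/\sigma)^d$, one obtains $O\!\bigl(d^2(C_1\sqrt{c\log d})^d/\sigma\bigr)$, which tends to zero only when $\log d$ is bounded---the extra factor $(\sqrt{\log d})^d$ is the price of non-uniformity of the binomial over its effective support. To close this gap I would replace the worst-case density by the Gaussian envelope $\mathbb{P}(a_i = v) \lesssim (C_1/\sigma)\exp(-v^2/(3\sigma^2))$ and perform a dyadic decomposition over $\|f\|_\infty$: on each shell $\|f\|_\infty \in (2^{k-1}\sigma, 2^k\sigma]$ at least one coefficient exceeds $2^{k-1}\sigma$ in absolute value and therefore contributes an additional factor $\exp(-2^{2k-2}/3)$, while the mod-$p$ count in the enlarged box contributes at most $O(d(2^k\sigma)^{d-1}\log(2^k\sigma))$. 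The delicate case is the intermediate range $k \sim \tfrac12\log_2 d$, where the polynomial growth $2^{k(d-1)}$ of the shell count nearly cancels the Gaussian decay; resolving it appears to require a sharper shell count exploiting that $f_0 = GH$ forces the coefficients of $f_0$ to lie on a thin algebraic subvariety of the box $[-M,M]^d$ (for example through Mignotte's bound $\|G\|_\infty, \|H\|_\infty \leq 2^d\sqrt{d+1}\|f_0\|_\infty$, which restricts the effective dimension of the family of factorizations), after which summation over $k$ yields a total bound of $o(1)$ for all $d$.
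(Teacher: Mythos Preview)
Your obstacle---the factor $(\sqrt{\log d})^{d}$---is an artifact of your choice $M=\sigma\sqrt{c\log d}$. The paper avoids it by truncating at $K=4^{d}=2\sigma$, a \emph{fixed} multiple of the standard deviation. With that cutoff the in-box estimate reads
\[
\#\{\text{reducible in }[-K,K]^{d}\}\cdot\Bigl(\tfrac{C_1}{\sigma}\Bigr)^{d}
\;\le\; O\bigl((4K)^{d-1}\log K\bigr)\cdot\Bigl(\tfrac{C_1}{\sigma}\Bigr)^{d}
\;=\; O\bigl((8C_1)^{d}\,d\bigr)\big/\sigma,
\]
and since $8C_1=8/\sqrt{2\pi}<4$ while $\sigma=4^{d}/2$, this is $O\bigl(d\,(8C_1/4)^{d}\bigr)=o(1)$; the paper's final line is essentially $O(d\,2^{d}/4^{d})$. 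No dyadic shells, Gaussian envelope, or appeal to Mignotte are needed, so the elaborate fix you propose is beside the point once the truncation is taken at $O(\sigma)$ rather than $O(\sigma\sqrt{\log d})$.

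The cost of the smaller cutoff falls on the tail, which is exactly where you were being careful. The paper asserts $\mathbb{P}(|a_i|>4^{d})\le 2\exp(-2\cdot 4^{d}/d)$ ``by Hoeffding,'' but with $4^{2d}$ trials Hoeffding actually gives $2\exp\bigl(-2(4^{d})^{2}/4^{2d}\bigr)=2e^{-2}$, a constant; after the union bound over $d$ coordinates this is not $o(1)$. So your caution about the tail is justified, and the paper's tail step as written does not stand. In summary: your truncation controls the tail but inflates the in-box count; the paper's truncation controls the in-box count but, with Hoeffding applied correctly, loses the tail. Neither argument is complete as stated, and the gap your dyadic scheme was meant to close is real---but your sketch does not close it either, for the reason you yourself flag at $k\sim\tfrac12\log_2 d$.
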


\begin{proof}
First, we fix $b_0,b_1,\dots,b_{n-1}\in\{-K,-K+1,\ldots, K\}$. Polynomial $f_d(x)$ has $b_i$'s as its coefficients with the following probability
\begin{align}
\mathbb{P}\left(a_i=b_i \text{ for each } 0\leq i\leq n-1\right)&\leq \mathbb{P}\left(a_i=0 \text{ for each } 0\leq i\leq n-1\right)\notag \\ 
&= \prod_{i=0}^n \mathbb{P}(a_i=0) = \mathbb{P}(a_i=0)^d \notag\\
&=\left(\frac{1}{4^d\sqrt{2\pi}}\right)^d\notag
\end{align}
Therefore, the probability that $f_d$ is reducible can be calculated by dividing it in two cases: when for all $i\leq d-1$ $a_i \leq 4^d$ and otherwise. So we can rewrite the probability of interest as
\begin{equation*}
\mathbb{P}\left(f_d (x) \text{ is reducible}\right)= \left(\text{\# of reducible polynomials}\right)(A+B),
\end{equation*}
where
\begin{equation*}
    A=\mathbb{P}\left(a_i=b_i \text{ for each } 0\leq i\leq d-1 \text{ and } |a_i| \leq 4^d\right)
\end{equation*}
and
\begin{equation*}
    B=\mathbb{P}\left(a_i=b_i \text{ for each } 0\leq i\leq d-1 \text{ and } \exists \text{ i such that } |a_i| > 4^d \right).
\end{equation*}
By Hoeffding inequality for all $i$ $$\mathbb{P}\left(|a_i| > 4^d \right)\leq 2\exp\left(-2\frac{4^d}{d}\right),$$
so by union bound
\begin{equation}\label{eq: hoef}
    \mathbb{P}\left(|a_i| > 4^d \text{ for some $i$}\right)\leq 2d\cdot\exp\left(-2\frac{4^d}{d}\right).
\end{equation}
Now, from the argument in Theorem \ref{th3.1} follows that the number of reducible polynomials can be bounded by $(4K)^{d-2}(4K+O(K\log K))$, where $\vert a_i \vert \leq K.$
Since the bound in~\eqref{eq: hoef} decays to $0$ as $d\exp(-4^d/d)$ and $K\leq 4^{2d}$ we conclude that $$\left(\text{\# of reducible polynomials}\right)\cdot B=o(1)$$ 
To estimate the rest we set $K=4^d.$ By simplifying the expression above we get that 
\begin{equation*}
(\# \text{ of reducible polynomials}) \leq O \left(4^{d-1}\left(4^d\right)^{d-1}+4^{d-2}\left(4^d\right)^{d-1}\log\left(4^d\right)\right).    
\end{equation*} 
Thus, 
\begin{align*}
\mathbb{P}(f_d (x) \text{ is reducible}) &=\mathbb{P}\left(a_i=b_i \text{ for each } 0\leq i\leq n-1  \text{ given that } |a_i| \leq 4^d \right)\notag\\
&\qquad\qquad\times(\text{\# of reducible polynomials}) + o(1) \notag\\
&= C\left(\frac{4^{d-2}4^{d(d-1)}}{\left(\sqrt{2\pi}4^d\right)^d}\right)\left(4+\log\left(4^d\right)\right) + o(1) \notag\\
&\leq C\left(\frac{2^{2d-4}4^{d(d-1)}}{2^d 4^{d^2}}\right)\left(4+\log\left(4^d\right)\right) + o(1) \notag\\
&\leq C\left( \frac{2^{d-4}\log\left(4^d\right)}{4^d}\right)+o(1) \to 0 \textit{ as d} \to \infty.\notag
\end{align*}
Therefore, $\mathbb{P}\left(f_d (x) \text{ is reducible}\right) \to 0$ as $n$ $\to \infty.$
\end{proof}
%
%
\appendix
\section{}\label{Appendix}
Recall that, as in \cite{bourgain}, we define a random variable to be $p$-bounded of exponent $r$ in the following way:

\begin{defn}\label{4.1} Let $p,q \in \mathbb{R}_{+}$ such that $0 <q< p< 1$ and let $r \in \mathbb{Z}_{+}$. A $\mathbb{Z}$-valued random variable $a$  is called $p \textit{-bounded of exponent }r$ if there exists a $\mathbb{Z}$-valued symmetric random variable $\beta^{(\mu)}$ taking value 0 with probability $1-\mu=p$ such that the following conditions holds:  
\begin{enumerate}
\item $\max_x \mathbb{P}(a = x) \leq p$ 
\item $q \leq \min_x \mathbb{P}(\beta^{(\mu)}=x) \text{ and } \max_x \mathbb{P}(\beta^{(\mu)}=x) \leq p$
\item $\forall t \in \mathbb{R} \text{ we have } \left|\mathbb{E}(\exp(2\pi i a t))\right|^r \leq \mathbb{E}\left(\exp(2\pi i \beta^{(\mu)} t)\right).$ 
\end{enumerate}
\end{defn}
The third condition can be rewritten as 
\begin{equation*}
|\mathbb{E}(\exp(2\pi i a t))|^r \leq \mathbb{E}(\exp(2\pi i \beta^{(\mu)} t))=1-\mu+\mu \sum_{s} p_s\cos(2\pi b_s t),    
\end{equation*} where $p_s= \mathbb{P}(\beta^{(\mu)}= b_s)/ \mu=\mathbb{P}(\beta^{(\mu)}= -b_s)/ \mu$.
To proof of Lemma~\ref{lem: pbdd} follows from straight forward computations
\begin{proof}[Proof of Lemma~\ref{lem: pbdd}]
Set $p_0=\frac{1}{2K+1}.$ By the symmetry of $A$ we have 
\begin{equation*}
\left|\mathbb{E}\exp(2\pi i At)\right|= \left| p_0 + \frac{2}{2K+1}\sum_{j=1}^K\cos(2\pi j t)\right|.
\end{equation*}
Thus,
\begin{align*}
\left|\mathbb{E}\exp(2\pi i At)\right|^2&=\left(p_0 + \frac{2}{2K+1}\sum_{j=1}^K\cos(2\pi j t)\right)^2 \notag\\
&=p_0^2+\frac{4 p_0}{2K+1}\sum_{j=1}^K\cos(2\pi j t)+\frac{4}{(2K+1)^2}\left(\sum_{j=1}^K\cos(2\pi j t)\right)^2.
\end{align*}
Converting the product of cosines into the sums we get
\begin{align*}
&=p_0^2+\frac{4p_0}{2K+1}\sum_{j=1}^K\cos(2\pi j t)+\frac{4}{(2K+1)^2}\sum_{j=1}^K\cos^2(2\pi j t)+\frac{8}{(2K+1)^2}\sum_{j<i}^K\cos(2\pi j t)\cos(2\pi i t )\notag\\
&=p_0^2+\frac{4p_0}{2K+1}\sum_{j=1}^K\cos(2\pi j t)+\frac{2}{(2K+1)^2}+\frac{2}{(2K+1)^2}\sum_{j=1}^K\cos(4\pi j t) \notag\\
&\qquad\quad+\frac{4}{(2K+1)^2}\sum_{j<i}^K\left(\cos(2\pi (i-j) t)+\cos(2\pi (j+i) t)\right). \notag\\
\end{align*}
Finally grouping the terms based in the cosines we derive that
\begin{align*}
&=p_0^2+\frac{2}{(2K+1)^2}+\frac{4p_0}{2K+1}\sum_{j=1}^K\cos(2\pi j t)+\frac{2}{(2K+1)^2}\sum_{j=1}^K\cos(4\pi j t)\notag\\
&\qquad\quad+\frac{4}{(2K+1)^2}\sum_{j<i}^K\cos(2\pi (i-j) t) +\frac{4}{(2K+1)^2}\sum_{j<i}^K\cos(2\pi (j+i) t)\notag\\
&=p_0^2+\frac{2}{(2K+1)^2}+\left(\frac{4p_0}{2K+1}+\frac{4}{(2K+1)^2}(K-1)\right)\cos(2\pi t)\notag\\
&\qquad\quad+\left(\frac{4p_0}{2K+1}+\frac{4}{(2K+1)^2}(K-2)+\frac{2}{(2K+1)^2}\right)\cos(4\pi t)\notag\\
&\qquad\quad+\left(\frac{4p_0}{2K+1}+\frac{4}{(2K+1)^2}(K-3)+\frac{4}{(2K+1)^2}\right)\cos(6\pi t)\notag\\
&\qquad\quad+\left(\frac{4p_0}{2K+1}+\frac{4}{(2K+1)^2}(K-4)+\frac{2}{(2K+1)^2}+\frac{4}{(2K+1)^2}\right)\cos(8\pi t) \notag\\
&\qquad\quad\dots \notag\\
&\qquad\quad+\left(\frac{2}{(2K+1)^2}\right)\cos(4K\pi t).\notag
\end{align*}
Hence,

\begin{equation*}
|\mathbb{E}\exp(2\pi i At)|^2 \leq 1 - \frac{4Kp_0}{2K+1}+\left(\frac{4p_0}{2K+1}+\frac{4}{(2K+1)^2}(K-1)\right)\cos(2\pi t).
\end{equation*}

Thus, for $p_0=\frac{1}{2K+1}$ $\beta^{(\mu)}$ can be constructed as follows:
\begin{equation}
\beta^{(\mu)}=
\begin{cases}
1, & \text{with probability}\  \frac{2K}{(2K+1)^2} \\
0, & \text{with probability}\  1-\frac{4K}{(2K+1)^2}\\
-1, & \text{with probability}\  \frac{2K}{(2K+1)^2}\\
\end{cases}
\end{equation}
Therefore A is $\left(1-\frac{4K}{(2K+1)^2}\right)$-bounded of exponent 2 with $q= \frac{2K}{(2K+1)^2}$,
\end{proof}

\begin{proof}[Proof of Lemma~\ref{lem: LWO}]
For each $k$ denote $\beta_k^{(\mu)}$ as symmetric random variable from Definition \ref{def: pbdd} that correspond to $a_k$. Also suppose $Q$ a sufficiently large number. Finally, to simplify the notation, let us denote $\sum^m_{k=0}{a_k}=X_m.$ 

\begin{align}
\mathbb{P}\left(X_{m}=x\right)&=\mathbb{E}\mathds{1}_{\{X_{m}=x\}}\\
&=\frac{1}{Q}\mathbb{E}\sum_{\xi \in \mathbb{Z}/Q\mathbb{Z}}\exp(2\pi i (X_{m}-x)\xi /Q)) \notag\\
&\leq \frac{1}{Q}\sum_{\xi \in \mathbb{Z}/Q\mathbb{Z}}\prod_{k=0}^{m} |\mathbb{E}\exp(2\pi i a_k\xi /Q)|\notag\\
&\leq \prod_{k=0}^{m}\left(\frac{1}{Q}\sum_{\xi \in \mathbb{Z}/Q\mathbb{Z}} |\mathbb{E}\exp(2\pi i a_k\xi /Q)|^m\right)^{1/m} & \mbox{ (by H\"older's inequality)}\notag\\
&\leq \frac{1}{Q}\sum_{\xi \in \mathbb{Z}/Q\mathbb{Z}} |\mathbb{E}\exp(2\pi i a_{k_0}\xi /Q)|^m,\notag
\end{align}
where $k_0$ corresponds to the largest factor in the previous line. Since $\{a_k\}$ are identical, we know $\mathbb{P}(\beta_{k_0}^{(\mu)}=1)=\mu p_{k_0}/2$. Thus, since $\mu p_{k_0}\geq 2q,$ by applying the inequality from \cite[Corollary 7.13]{Tao} we get the following:
\begin{align}
&\leq \frac{1}{Q}\sum_{\xi \in \mathbb{Z}/Q\mathbb{Z}} \Big{(} 1-2q +2q\cos(2\pi \xi/Q)\Big{)}^{m/r}\\
&\leq \int_0^1 \Big{(}1-2q+2q\cos(2\pi t)\Big{)}^{m/r}dt+\frac{1}{m}\notag\\
&=\frac{C\sqrt{r}}{\sqrt{qm}}.\notag
\end{align}
\end{proof}


\bibliography{poly.bib}
\end{document}